\numberwithin{equation}{section}
\newtheorem{theorem}{Theorem}[section]
\newtheorem{lemma}[theorem]{Lemma}
\newtheorem{corollary}[theorem]{Corollary}
\theoremstyle{definition}
\newtheorem{definition}[theorem]{Definition} 
\newtheorem{remark}[theorem]{Remark}
\newtheorem{example}[theorem]{Example}
\DeclareMathOperator{\init}{in}
\DeclareMathOperator{\pdim}{pdim}
\DeclareMathOperator{\reg}{reg}
\DeclareMathOperator{\depth}{depth}
\newcommand{\K}{{\mathbb{K}}}
\newcommand{\Z}{{\mathbb{Z}}}
\begin{document}

 
\title{Regularity and $h$-polynomials of toric ideals of graphs}
\thanks{Version: \today}

\author[G. Favacchio]{Giuseppe Favacchio}
\author[G. Keiper]{Graham Keiper}
\author[A. Van Tuyl]{Adam Van Tuyl}

\address[G. Favacchio]
{Dipartimento di Matematica e Informatica\\
Universit\`a degli Studi di Catania\\
Viale A. Doria, 6 \\
95100 - Catania, Italy}
\email{favacchio@dmi.unict.it }

\address[G. Keiper, A. Van Tuyl]
{Department of Mathematics and Statistics\\
McMaster University, Hamilton, ON, L8S 4L8}
\email{keipergt@mcmaster.ca, 
vantuyl@math.mcmaster.ca}

\keywords{Toric ideals, graphs, graded Betti numbers, regularity,
Hilbert series}
\subjclass[2020]{13D02, 13P10, 13D40, 14M25, 05E40}


\begin{abstract}
For all integers $4 \leq r \leq d$, we show that there 
exists a finite simple graph $G= G_{r,d}$ with toric ideal
$I_G \subset R$ such that $R/I_G$ has
(Castelnuovo-Mumford) regularity $r$ and $h$-polynomial
of degree $d$.  
To achieve this goal, we identify a family of graphs
such that the graded Betti numbers of the associated toric ideal
agree with its initial ideal, and furthermore, this initial ideal
has linear quotients.  As a corollary, we can 
recover a result of Hibi, Higashitani, Kimura, and O'Keefe that
compares the depth and dimension of toric ideals of graphs.
\end{abstract}

\thanks{{\bf Acknowledgements.} 
The authors thank Johannes Hofscheier for his feedback.  
Our results were inspired by
calculations using {\em Macaualy2} \cite{M2}. Favacchio thanks
the support of the Universit\`a degli Studi di
Catania ``Piano della Ricerca 2016/2018 
Linea di intervento 2" and the ``National  Group for Algebraic and Geometric Structures, and their Applications" (GNSAGA-INdAM).   Van
Tuyl acknowledges the support of NSERC RGPIN-2019-05412.}

\maketitle


\section{Introduction}\label{s. intro and back on HS}

Let $\K$ be an algebraically closed field of characteristic zero, and let $R = \K[x_1,\ldots,x_n]$ be the standard graded polynomial ring over $\K$.   Hibi and Matsuda \cite{HM1}
initiated a comparison of the (Castelnuovo-Mumoford) regularity
of $R/I$ and the degree of the $h$-polynomial appearing 
in the Hilbert series of $R/I$.  They
showed that for any integers $d,r \geq 1$, there is a monomial ideal
$I$ such that the regularity of $R/I$ is $r$, and the degree of the $h$-polynomial is $d$.  Hibi and Matsuda later refined this
result in \cite{HM2} to show that $I$ can be taken to be a lexsegment ideal, and later, with Van Tuyl  \cite{HMVT}, 
showed that $I$ could
be an edge ideal.  Further comparisons of the regularity and
degree have been carried out for the edge ideals of
Cameron-Walker graphs \cite{HKMVT} and binomial edge
ideals \cite{HM3,KK}.  In this note we
compare these invariants for the toric ideals of 
finite simple graphs.

Given a finite simple graph $G$ on the vertex
set $V = \{v_1,\ldots,v_n\}$ with edge set $E = \{e_1,\ldots,e_q\}$,
the {\it toric ideal of $G$}, denoted $I_G$, is the 
kernel of the map $\varphi : \K[E] = \K[e_1,\ldots,e_q] \rightarrow \K[v_1,\ldots,v_n]$ given by $\varphi(e_i) = v_{i_1}v_{i_2}$ where $e_i = \{v_{i_1},v_{i_2}\} \in E$.  
Some properties of the homological invariants of 
$I_G$ can be found in \cite{BOVT,FHKVT,GHKKVTP,GM,HBOK,HHKOK,OH,TT}.
Our main result adds to this list of properties, and contributes
to Hibi and Matsuda's program.

\begin{theorem}\label{maintheorem}
Let $4 \leq r \leq d$ be integers.  Then there is a connected finite
simple graph $G = G_{r,d}$ such that the toric ideal of $G$
satisfies ${\rm reg}(\K[E]/I_G) = r$ and $\deg h_{\K[E]/I_G}(x) =d$.  
\end{theorem}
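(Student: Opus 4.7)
The plan is to exhibit, for each pair $(r,d)$ with $4 \le r \le d$, an explicit connected graph $G_{r,d}$ together with a monomial order on $\K[E]$ such that
\begin{enumerate}
\item[(i)] the initial ideal $\init(I_{G_{r,d}})$ is squarefree and has linear quotients, and
\item[(ii)] $\beta_{i,j}(\K[E]/I_{G_{r,d}}) = \beta_{i,j}(\K[E]/\init(I_{G_{r,d}}))$ for all $i,j$.
\end{enumerate}
Once such a family is in hand, the theorem reduces to combinatorial computations on $\init(I_{G_{r,d}})$: the regularity of $\K[E]/I_{G_{r,d}}$ is transferred from the initial ideal via (ii) and then read off from the colon chain of the linear quotients, while the degree of the $h$-polynomial is preserved under any Gröbner degeneration and so can likewise be computed directly from $\init(I_{G_{r,d}})$.

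For the construction I would build $G_{r,d}$ as a \emph{two-block} graph. The first block, call it $H_r$, should be a subgraph whose toric ideal already has regularity exactly $r$; a natural candidate is a graph built from a suitable number of even cycles glued along a common edge or path, configurations whose toric ideals carry well-studied quadratic Gröbner bases (see e.g.\ \cite{BOVT,HBOK,OH}) and for which the leading terms of the primitive even closed walks can be ordered lexicographically so as to yield linear quotients of length governed by $r$. The second block, attached at a vertex or edge of $H_r$, should be a pendant structure (such as a path with whiskers, or a chain of triangles) tuned so that attaching it increases $\dim \K[E]/I_{G_{r,d}}$ by exactly $d-r$ without producing any new primitive even closed walks, so that neither the generators of $I_{G_{r,d}}$ nor the linear-quotient ordering on $\init(I_{G_{r,d}})$ are affected in a way that raises the regularity. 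The hypothesis $r \ge 4$ is what permits the first block to be built from nondegenerate cycle configurations; smaller values of $r$ would force the construction outside this uniform family.

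The main obstacle, and where most of the technical work will go, is verifying condition (ii). In general a Gröbner degeneration yields only the upper bound $\beta_{i,j}(\K[E]/I_{G_{r,d}}) \le \beta_{i,j}(\K[E]/\init(I_{G_{r,d}}))$, which on its own gives only $\reg(\K[E]/I_{G_{r,d}}) \le r$. To force equality I would lift the linear-quotient chain on $\init(I_{G_{r,d}})$ back to the binomial setting one step at a time: at each stage, the colon ideal $(I_{G_{r,d}} : f_i)$ should be identifiable with the toric ideal of a smaller graph obtained from $G_{r,d}$ by edge deletion or contraction, whose Betti table can be computed by induction on the size of the graph; matching the inductive Betti table to the one read off from the linear quotients on $\init(I_{G_{r,d}})$ forces equality of the Betti numbers. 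Once (i) and (ii) are in place, both $\reg(\K[E]/I_{G_{r,d}}) = r$ and $\deg h_{\K[E]/I_{G_{r,d}}}(x) = d$ follow by direct computation, and the corollary mentioned in the abstract comparing the depth and dimension of toric ideals of graphs becomes a by-product of the same Hilbert-series analysis.
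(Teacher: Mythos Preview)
Your proposal has a genuine gap in the ``second block'' step, and it is fatal to the strategy as written. You want to attach a pendant structure to $H_r$ that creates no new primitive even closed walks and thereby raise $\dim \K[E]/I_G$ by $d-r$ without touching the regularity. But if the attachment produces no new primitive walks, then $I_{G_{r,d}} = I_{H_r}\cdot \K[E']$ is just the extension of $I_{H_r}$ to a polynomial ring with extra free variables. In that case $\K[E']/I_{G_{r,d}} \cong (\K[E]/I_{H_r})[e_{\text{new}}]$, so the Hilbert series picks up a factor $(1-x)^{-k}$ while the Krull dimension also rises by $k$; the $h$-polynomial is therefore \emph{unchanged}, not shifted by $d-r$. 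In short, any attachment that leaves the toric ideal essentially untouched also leaves $\deg h$ untouched. To separate $\deg h$ from $\reg$ you must introduce new generators into $I_G$, and then your control over the regularity via the first block evaporates.

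The paper resolves this by working in the opposite direction. It first builds a single family $G_t$ (the complete bipartite graph $K_{2,t}$ with a triangle glued at each degree-$t$ vertex) whose toric ideal is genuinely new for each $t$, and shows---via linear quotients on $\init(I_{G_t})$ together with a Betti-table comparison lemma (equality in all rows but one forces equality everywhere)---that $\reg(\K[G_t])=4$ while $\deg h_{\K[G_t]}(x)=t+3$. So the base family already realises every pair $(4,d)$ with $d\ge 5$. Then an edge-gluing result (attaching a $4$-cycle along an edge increases both $\reg$ and $\deg h$ by exactly $1$) is iterated $r-4$ times to reach $(r,d)$. Note also that $\init(I_{G_t})$ is \emph{not} squarefree (it contains $a_i^2 f_1 f_3 e_2$), so your requirement (i) would fail for this family anyway; and the paper's mechanism for (ii) is not a lift of the colon chain but the one-row criterion just mentioned, which is far less delicate than what you propose.
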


\noindent
The proof of Theorem \ref{maintheorem} has two components.  First,
we consider the family of graphs constructed from the complete
bipartite graph $K_{2,t}$ by adjoining a ``triangle" to each
vertex of degree two (see Figure \ref{fig:graph G_5}).  
We prove that the toric ideals of the graphs in this family  have a unique extremal graded Betti number.  We use this
fact to show that
for any $e \geq 5$, we can construct a graph $G$ 
such that $\K[E]/I_G$ has 
regularity 4 and the degree of its $h$-polynomial
is $e$. 
The second component is to leverage the splitting techniques
of the authors and Hofscheier \cite{FHKVT} to
create the desired graphs of Theorem \ref{maintheorem} from the graphs in this family.  As a bonus corollary, we give a new proof
for the main result of \cite{HHKOK} which compared
the depth and dimension of toric ideals of graphs
(see Corollary~\ref{hibiresult}).

Our paper is structured as follows.
In Section \ref{s. background}, we give the relevant background, including
the undefined terms from the introduction. We also recall some
tools from \cite{FHKVT}; they are used to show that if $r \geq 1$,
there is a graph 
$G$ with $r = {\rm reg}(\K[E]/I_G) = \deg h_{\K[E]/I_G}(x)$. 
In Section \ref{s. reg 4 d 5}, we introduce a family of
connected graphs, and we show we can control the values
of ${\rm reg}(\K[E]/I_G)$ and $\deg h_{\K[E]/I_G}(x)$, where  $I_G$ is the toric ideal
of a graph in this family.
These graphs can then be
used to prove Theorem~\ref{maintheorem}.  We conclude
with remarks in Section 4 about pairs $(r,d)$ not covered by Theorem \ref{maintheorem}.


\section{Preliminaries}\label{s. background}

We recall the relevant background on
homological invariants and toric ideals of graphs.

\subsection{Homological invariants}
If $I$ is a homogeneous ideal of $R$, then 
the minimal graded free resolution of $R/I$ has the form
\[0 \rightarrow \bigoplus_{j \in \mathbb{N}} R(-j)^{\beta_{p,j}(R/I)}
\rightarrow \cdots \rightarrow \bigoplus_{j \in \mathbb{N}} R(-j)^{\beta_{1,j}(R/I)}
\rightarrow R \rightarrow R/I \rightarrow 0\]
where $R(-j)$ is the ring $R$ with its grading shifted by
$j$, and $\beta_{i,j}(R/I)=\dim_\K {\rm Tor}_i^R(R/I, \K)_j$ is called the ${i,j}$-th {\em graded Betti number} of $R/I$.  
The {\em (Castelnuovo-Mumford) regularity} of $R/I$ is 
\[{\rm reg}(R/I) = \max\{j-i ~|~ \beta_{i,j}(R/I) \neq 0\}.\]
The {\em projective dimension} of $R/I$ is the length of the 
minimal graded free resolution, that is
\[
\pdim(R/I) = \max\{i ~|~ \beta_{i,j}(R/I) \neq 0\} \text{.}
\]

The {\em Hilbert series} of a standard graded $\K$-algebra $R/I$ is the formal power series
\[
HS_{R/I}(x) = \sum_{i \geq 0} \left[\dim_\K (R/I)_i\right]x^i
\]
where $\dim_\K (R/I)_i$ is the dimension of $i$-th graded piece of $R/I$.
The Hilbert series of $R/I$ can be read from any resolution of $R/I$ 
(e.g., see \cite[p. 100]{hhGTM}). In particular, 
\begin{equation}\label{eq. HS and Betti}
HS_{R/I}(x) = \dfrac{1+\sum_{i,j}(-1)^i \beta_{i,j}(R/I)x^{j}}{(1-x)^n}.
\end{equation}

By the Hilbert-Serre Theorem (e.g., see \cite[Theorem 5.1.4]{V}) there is a polynomial $h_{R/I}(x)\in \Z[x]$, called the {\em $h$-polynomial} of $R/I$, such that $HS_{R/I}$ can be written as
\begin{equation}\label{eq. reduced HS}
HS_{R/I}(x) = \frac{h_{R/I}(x)}{(1-x)^{\dim(R/I)}} ~~\mbox{with $h_{R/I}(1) \neq 0$} \text{,}    
\end{equation}
where $\dim(R/I)$ denotes the Krull dimension of $R/I$.

We recall a fact about extremal Betti numbers;  
see \cite{BCHP}
for more on their properties.

\begin{definition}
A graded Betti number of $R/I$, say $\beta_{a,b}(R/I)\neq 0$,  is {\em extremal} if $\beta_{i,j}(R/I)= 0$ for any pair $(i,j)$ such that $i\ge a$ and  $j>b$ and  $j-i\ge b-a$.
\end{definition}

\begin{lemma}\label{. reg deg 1 extremal betti}
Suppose $\beta_{a,b}(R/I)$ is the only extremal Betti
number of $R/I$.  Then   $\reg(R/I)=b-a$, $\pdim(R/I)=a$,
and  $\deg h_{R/I} (x)= b-\dim R+\dim R/I.$
\end{lemma}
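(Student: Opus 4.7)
The plan is to first identify $a$ and $b$ with two familiar homological invariants using the uniqueness hypothesis, and then exploit these identifications to read off the degree of the numerator of the Hilbert series.

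For the statements $\pdim(R/I)=a$ and $\reg(R/I)=b-a$, my strategy is to manufacture two extremal Betti numbers by inspection and then apply uniqueness. Letting $p=\pdim(R/I)$ and $b^{\ast}=\max\{j:\beta_{p,j}(R/I)\neq 0\}$, the entry $\beta_{p,b^{\ast}}(R/I)$ is automatically extremal, because every Betti number in rows $i>p$ vanishes, so the defining condition holds vacuously outside $(p,b^{\ast})$. By hypothesis $(p,b^{\ast})=(a,b)$, hence $\pdim(R/I)=a$. An entirely analogous argument along the regularity line---take the largest $i$ with $\beta_{i,i+\reg(R/I)}(R/I)\neq 0$---produces a second extremal Betti number, which must again coincide with $(a,b)$, giving $\reg(R/I)=b-a$.

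For the degree of the $h$-polynomial, combining \eqref{eq. HS and Betti} and \eqref{eq. reduced HS} yields
\[
h_{R/I}(x)\,(1-x)^{\dim R-\dim R/I}\;=\;1+\sum_{i,j}(-1)^i\beta_{i,j}(R/I)\,x^j\;=:\;K(x),
\]
so it suffices to show $\deg K(x)=b$. Using what was just established, I would verify two short points: (i) every $\beta_{i,j}(R/I)$ with $j>b$ vanishes, since $i>a=\pdim(R/I)$ kills it by definition, while $i\leq a$ forces $j-i>b-a=\reg(R/I)$; and (ii) in the top degree $j=b$, the same two-case split shows $\beta_{i,b}(R/I)=0$ whenever $i\neq a$. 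Hence the coefficient of $x^b$ in $K(x)$ equals $(-1)^a\beta_{a,b}(R/I)\neq 0$, and the desired formula $\deg h_{R/I}(x)=b-\dim R+\dim R/I$ follows by subtracting the degree of $(1-x)^{\dim R-\dim R/I}$.

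The delicate step is (ii): a priori the coefficient of $x^b$ in $K(x)$ is an alternating sum $\sum_i(-1)^i\beta_{i,b}(R/I)$, so cancellation could in principle lower $\deg K(x)$ and spoil the formula. The hypothesis that $\beta_{a,b}(R/I)$ is the \emph{unique} extremal Betti number is precisely what collapses this sum to the single term $(-1)^a\beta_{a,b}(R/I)$, thereby ruling out any such cancellation.
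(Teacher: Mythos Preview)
Your argument is correct and follows the same line as the paper's proof: first establish $\pdim(R/I)=a$ and $\reg(R/I)=b-a$ from uniqueness of the extremal entry, then read off the degree of the numerator in~\eqref{eq. HS and Betti} and subtract $\dim R-\dim R/I$. In fact you are more careful than the paper on one point: the paper simply asserts that the non-reduced numerator has degree~$b$, whereas your step~(ii) explicitly rules out cancellation in the alternating sum $\sum_i(-1)^i\beta_{i,b}(R/I)$ by showing that $i<a$ forces $b-i>\reg(R/I)$, leaving $(-1)^a\beta_{a,b}(R/I)$ as the sole contribution.
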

 
 \begin{proof}
 Since  $\beta_{a,b}(R/I)$ is an extremal Betti number, from the definition, we have $\beta_{a,b}(R/I)\neq 0$ and $\beta_{i,j}(R/I)= 0$ for any $i\ge a$,  $j>b$ and  $j-i\ge b-a$.
 Moreover, because it is the unique extremal Betti number, $\beta_{i,j}(R/I)= 0$ 
if either $i\ge a $ or $j>b$ (otherwise there must be some other extremal Betti). 
Thus, the  Betti table of $R/I$ has a rectangular shape and the pair $(a,b)$ determines the regularity and the
projective dimension. Furthermore, from equation \eqref{eq. HS and Betti}, the degree of the non-reduced numerator
in the Hilbert series is $b$, so by  \eqref{eq. reduced HS}, the degree of the $h$-polynomial is $b-\dim R+\dim R/I$.
 \end{proof}

A monomial ideal $I \subseteq R$ is said to have  
{\em linear quotients} if its minimal generators $\{g_1,\ldots,g_m\}$ can
be ordered so that the quotient ideal
$\langle g_1,\ldots, g_{j-1}\rangle: \left\langle  g_j \right\rangle$
is generated by variables for every $j=2,\ldots,m$.
Linear quotients were first defined in \cite{HT}.
By {\cite[Corollary~2.7]{SV}}, a monomial ideal $I \subseteq R$ 
with linear quotients with respect to the ordering
$g_1,\ldots,g_m$,
has graded Betti numbers given by the formula
\begin{equation}\label{eq. betti from quotients}
\beta_{i+1,i+j}(R/I)=\sum_{1 \leq p \leq m,\text{ }{\rm  deg}(g_{p})=j} \binom{n_{p}}{i} ~~\mbox{for $i \geq 0$}
\end{equation}
where $n_p$ denotes the number of different variables
generating $\langle g_1,\ldots,g_{p-1}\rangle : \langle g_p \rangle$.

For a fixed monomial ordering, we let ${\rm in}(I)$ denote the 
{\it initial ideal
of $I$}.  It is well known that $\beta_{i,j}(R/I) \leq \beta_{i,j}(R/{\rm in}(I))$ for
all $i,j \geq 0$ (e.g., see \cite[Theorem 22.9]{PeevaBook}).  The following
result, found in \cite[Lemma 2.6]{GHKKVTP}, gives a criterion 
for when we have equality
for all $i,j \geq 0$.

\begin{lemma}\label{equalbettinumbers}
Fix a monomial order.  Suppose that $I \subseteq R$ is a homogeneous ideal
such that $\beta_{i,i+j}(R/I) = \beta_{i,i+j}(R/{\rm in}(I))$ for all $i$
and all $j \neq k$.  Then $\beta_{i,i+k}(R/I) = \beta_{i,i+k}(R/{\rm in}(I))$ for all~$i\geq0$.
\end{lemma}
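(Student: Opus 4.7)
The plan is to exploit the well-known fact that taking the initial ideal (with respect to any monomial order) preserves the Hilbert function, and hence $HS_{R/I}(x) = HS_{R/\init(I)}(x)$. Combining this with formula \eqref{eq. HS and Betti}, which expresses $HS_{R/I}$ in terms of the Betti numbers, should give enough information to pin down the single missing row.

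More concretely, I would set $\Delta_{i,j} := \beta_{i,j}(R/\init(I)) - \beta_{i,j}(R/I) \geq 0$, where nonnegativity follows from \cite[Theorem 22.9]{PeevaBook}. The hypothesis says $\Delta_{i,i+j}=0$ whenever $j \neq k$. Applying \eqref{eq. HS and Betti} to both $R/I$ and $R/\init(I)$, equality of the two Hilbert series (after clearing the common denominator $(1-x)^n$) gives the polynomial identity
\[
\sum_{i,j}(-1)^i \Delta_{i,j}\, x^j = 0.
\]
Extracting the coefficient of $x^m$ for each fixed $m$ yields $\sum_i (-1)^i \Delta_{i,m} = 0$. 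Rewriting the index $j = m - i$, the hypothesis forces $\Delta_{i,m} = 0$ for every $i$ with $m - i \neq k$, so the only surviving term in the alternating sum is the one with $i = m - k$. This forces $\Delta_{m-k,m} = 0$, equivalently $\Delta_{i,i+k} = 0$ for $i = m-k$. Letting $m$ range over all nonnegative integers covers every $i \geq 0$, which is precisely the desired conclusion.

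There is really no obstacle here; the only small care required is the bookkeeping between the two natural indexings $(i,j)$ and $(i,i+j)$ of the Betti table, and noting that the sign $(-1)^{m-k}$ that appears in the surviving term is irrelevant because the sum is zero. The same argument shows that knowing equality of Betti numbers on all but one \emph{row} of the Betti table automatically forces equality on that row too, which is a more memorable way to state the lemma.
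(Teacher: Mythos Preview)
Your argument is correct. The paper itself does not supply a proof of this lemma but merely cites it as \cite[Lemma 2.6]{GHKKVTP}; the Hilbert-series comparison you carry out is exactly the standard proof and is almost certainly what appears in that reference.
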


\subsection{Toric ideals of graphs}
We now turn to toric ideals of graphs,
as defined in the introduction.  Note that 
if $G$ is a finite simple graph,
then the toric ideal  $I_{G}$ is a prime homogeneous 
binomial ideal.  Many of the algebraic and 
geometric invariants of $I_G$ 
depend upon the combinatorics of $G.$ In order to discuss these 
results, we briefly introduce some relevant terminology and results
(see Villareal \cite{V} and Herzog, Hibi, and Ohsgui \cite{Binomi} for details).
Note that if $G = (V,E)$ is a finite simple graph, we may
sometimes write $\K[E]$ for $\K[e ~|~ e \in E]$ and $\K[G]$ for the ring $\K[E]/I_G$.

If $G$ is a finite simple graph, a {\emph{walk}} in $G$ is a sequence of edges $w=(e_{1}, e_2, \ldots, e_{k})$ such that $e_{i}\cap e_{i+1}\ne \emptyset$  for $i=1,\ldots,k-1$. Equivalently, a walk is a sequence of vertices $(x_{1},\dots,x_{k},x_{k+1})$ such that $\{x_{i},x_{i+1}\}\in E$ for $i=1,\dots, k$. A walk is an {\emph{even walk}} if $k$ is even. A {\emph{closed walk}} is a walk where $x_{k+1}=x_1$. 
Two closed even walks $(e_0,\ldots,e_{2k-1})$ and
$(e'_0,\ldots,e'_{2k-1})$ are equivalent up
to a {\em circular permutation} if
there is an $i$ such that 
$e_j = e'_{j+i}$ for all $j$ where $j+i$ is taken 
modulo $2k$ (or if the walk is in the reverse order,
i.e., $e_j = e'_{(2k-j)+i}$ for all $j$).

A finite graph $G$ is {\em connected} if for every $x,y \in V$ with $x\neq y$, there exists a walk having $x$ as its first vertex and $y$ as its last.
A closed walk $(e_1,\ldots,e_k)$ where each vertex and edge is distinct is called
a {\it cycle} of length $k$.
A graph $G$ is {{\em bipartite}} if there are no odd cycles in $G$.   An {\em $n$-cycle}, denoted $C_n$, is the graph
with vertex set $V = \{x_1,\ldots,x_n\}$ and
edge set $E = \{\{x_1,x_2\},\{x_2,x_3\},\ldots,\{x_{n-1},x_n\},
\{x_n,x_1\}\}$.

The generators of the toric ideal $I_G$ can be obtained from closed even walks in $G$;  we sketch out this connection.  To each
closed even walk $w=(e_{i_{1}},e_{i_{2}},\dots, e_{i_{2n}})$ 
in $G$, we can associate the binomial $f_{w}$ defined by 
\[f_w=\prod_{2 \nmid j}e_{i_j} - \prod_{2 \mid j} e_{i_j}\in I_{G}.\]
Note that it is straightforward to verify that $\varphi(f_w) = 0$ where
$\varphi:\K[e_1,\ldots,e_q] \rightarrow \K[v_1,\ldots,v_n]$ is the 
map defining $I_G = {\rm ker}(\varphi)$.   Among all closed
even walks, we identify a special subset.

\begin{definition} A binomial $f_1-f_2 \in I_G$ is 
{\emph{primitive}} if there exists no binomial $g_1-g_2 \in I_G$ such that $g_1 \mid f_1$ and $g_2 \mid f_2$. A closed even walk $w$ in a graph $G$ is said to be {\emph{primitive}} if the corresponding binomial $f_w$ is primitive in $I_G$.
\end{definition} 

The importance of primitive closed even walks lies in the next
theorem.

\begin{theorem}[{\cite[Proposition 10.1.10]{V}}]{\label{UGB}} The set of binomials associated with primitive closed even walks is a universal Gr\"obner basis  of $I_{G}$. \end{theorem}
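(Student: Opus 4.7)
The plan is to deduce this statement from a general theorem of Sturmfels: for any toric ideal $I_A$, the set of all primitive binomials (the \emph{Graver basis}) is a universal Gr\"obner basis. Granting that general principle, the theorem reduces to the combinatorial claim that the primitive binomials of $I_G$ are, up to sign, precisely the binomials $f_w$ associated with primitive closed even walks in $G$.

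The first step I would carry out is to set up a dictionary between arbitrary binomials in $I_G$ and closed even walks in $G$. Given $f = u - v \in I_G$ with $\gcd(u,v) = 1$, write $u = e_{i_1}\cdots e_{i_k}$ and $v = e_{j_1}\cdots e_{j_k}$. The defining condition $\varphi(u) = \varphi(v)$ forces the multiset of endpoints of the edges in $u$ to match that of $v$ vertex by vertex. Form the multigraph $H$ whose edges are those appearing in $u$ (coloured ``odd'') together with those appearing in $v$ (coloured ``even''); then each vertex of $H$ meets the same number of odd and even edges, so $H$ admits an Eulerian decomposition whose walks alternate colours, and each such walk automatically has even length. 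Conversely, every closed even walk $w$ in $G$ produces a binomial $f_w \in I_G$ by construction.

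The second step is to match primitivity on the two sides. If $f_w = u - v$ is not primitive, witnessed by some $g_1 - g_2 \in I_G$ with $g_1 \mid u$ and $g_2 \mid v$, then the edges of $g_1$ and $g_2$ assemble into a strictly shorter closed even subwalk $w'$ of $w$, so $w$ is not primitive. Conversely, any proper closed even subwalk of $w$ immediately gives a binomial dividing $f_w$, showing that non-primitive walks give non-primitive binomials. Combining the two steps, the map $w \mapsto f_w$ induces (up to sign and circular permutation) a bijection between primitive closed even walks in $G$ and primitive binomials in $I_G$, and Sturmfels' theorem finishes the argument.

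The hard part, I expect, will be the Eulerian-decomposition step for a \emph{primitive} binomial $f = u - v$: one must show that the two-coloured multigraph $H$ really is a single alternating closed walk, rather than a disjoint union of several. The key observation is that if $H$ decomposes as $H_1 \sqcup H_2$, then each piece yields a non-trivial binomial factor of $f$ inside $I_G$, directly contradicting primitivity. Verifying that the alternating parity of the colouring can be arranged coherently along a chosen Eulerian traversal---so that the traversal is genuinely a closed \emph{even} walk whose associated binomial recovers $f$ up to sign---is the technical crux of the proof.
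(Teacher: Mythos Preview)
The paper does not prove this theorem; it is quoted from Villarreal's book and used as a black box. There is no in-paper argument to compare against.

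Your outline is nonetheless a correct and standard route to the result. Reducing to Sturmfels' theorem (the Graver basis of any toric ideal is a universal Gr\"obner basis) is the right global step, and the remaining combinatorial content is exactly what you identify: every primitive binomial of $I_G$ is, up to sign, of the form $f_w$ for some closed even walk $w$. Note that with the paper's definition of a primitive walk---one whose associated binomial is primitive---half of your second step is tautological; the genuine work lies in your first step, showing that every binomial in $I_G$ with coprime terms arises from a single closed even walk. One small caution on your final paragraph: the decomposition ``$H = H_1 \sqcup H_2$'' should not be read as a disjoint union of graphs (the pieces may well share vertices), but rather as a partition of the edge multiset into two nonempty submultisets, each balanced at every vertex, and hence each yielding a nontrivial binomial in $I_G$ whose terms divide those of $f$. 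With that adjustment your sketch matches the standard argument one finds in Villarreal.
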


We round out this section by specializing one of the 
results of
\cite{FHKVT} that will be a  key ingredient in our proof
of Theorem \ref{maintheorem}.  
Recall that given a graph $G = (V,E)$ and $W \subseteq V$,  the \emph{induced subgraph} of $G$ on $W$ is the graph with vertex set $W$ and edge set $\{e \in E ~|~ e \subseteq W\}$.   Following
\cite[Construction 4.1]{FHKVT},  
let $G_1, G_2$ be two graphs and suppose that $H_1 \subseteq G_1, H_2\subseteq G_2$ are two induced subgraphs which are isomorphic with respect to some graph isomorphism   $\varphi : H_1 \to H_2$. We define the \emph{glued graph} $G_1 \cup_\varphi G_2$ of $G_1$ and $G_2$ along $\varphi$ as the disjoint union of $G_1$ and $G_2$, and we use $\varphi$ to identify vertices and edges in $H_{1}$ with their images in $H_{2}$. At times, we may be more informal and say that  \emph{$G_1$ and $G_2$ is glued along $H$} if the induced subgraphs $H \cong H_1$ and $H \cong H_2$ and isomorphism $\varphi$ are clear. 

It was shown in \cite{FHKVT} that under some hypotheses on
$G_1$ and $G_2$, if the $G_1$ and $G_2$ are glued along
some induced subgraph $H$, then many of the homological
invariants of $G_1 \cup_\varphi G_2$ are related to those of $G_1$ and
$G_2$.  In particular, if we specialize 
\cite[Corollary 3.11]{FHKVT}, we have 
the following result.

\begin{theorem}\label{t. 3.9 FHKVT}
	Let $G$ be any finite simple connected graph, and let
	$C_{2s}$ be an even cycle of length $2s \geq 4$.  Let $e$ be 
	any edge of $G$ and let $e'$ be any edge of $C_{2s}$.  If	$G' = (V',E')$ is the graph obtained by gluing $G$ and $C_{2s}$ along	$e \cong e'$,  then
	\begin{enumerate}
	    \item[$(i)$]  ${\rm reg}(\K[G']) = {\rm reg}(\K[G])+s-1$, and
	    \item[$(ii)$] $\deg h_{\K[G']}(x) = \deg h_{\K[G]}(x) + s-1$.
	\end{enumerate}
	\end{theorem}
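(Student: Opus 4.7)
The plan is to reduce the claim to the general gluing formula of [FHKVT, Corollary 3.11] (which this theorem is specializing) and then to compute by hand the contribution of $C_{2s}$. The setup is already in the right shape: the isomorphism $\varphi$ identifies a single edge of $G$ with a single edge of $C_{2s}$, so each of the induced subgraphs is a copy of $K_2$ and $G'$ is a glued graph in the precise sense of the construction preceding Theorem \ref{t. 3.9 FHKVT}. My first task would be to verify that gluing along a single edge satisfies the technical hypothesis of the general corollary, which is the splittability condition guaranteeing that the homological invariants break additively across the gluing subgraph.

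Next I would compute $\reg(\K[C_{2s}])$ and $\deg h_{\K[C_{2s}]}(x)$ directly. By Theorem \ref{UGB}, $I_{C_{2s}}$ is generated by the binomials coming from primitive closed even walks, and $C_{2s}$ admits a unique such walk up to circular permutation, namely the full traversal of the cycle. Labeling the edges cyclically as $e_1,\ldots,e_{2s}$, this produces the single generator
\[
f = \prod_{j \text{ odd}} e_j \ - \ \prod_{j \text{ even}} e_j,
\]
which has degree $s$. Hence $\K[C_{2s}]$ is a hypersurface with minimal free resolution $0 \to R(-s) \to R \to \K[C_{2s}] \to 0$, giving $\reg(\K[C_{2s}]) = s-1$. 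From \eqref{eq. HS and Betti} and \eqref{eq. reduced HS},
\[
HS_{\K[C_{2s}]}(x) \ = \ \frac{1 - x^s}{(1-x)^{2s}} \ = \ \frac{1 + x + \cdots + x^{s-1}}{(1-x)^{2s-1}},
\]
so $\dim(\K[C_{2s}]) = 2s-1$ and $\deg h_{\K[C_{2s}]}(x) = s-1$.

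Finally, I would combine these ingredients. Since the overlap subgraph $K_2$ has trivial toric ideal, it contributes $0$ to both regularity and $h$-polynomial degree, and the general formula from [FHKVT, Corollary 3.11] collapses to simple additivity. Plugging in the values computed for $C_{2s}$ yields
\[
\reg(\K[G']) \ = \ \reg(\K[G]) + (s-1) \quad \text{and} \quad \deg h_{\K[G']}(x) \ = \ \deg h_{\K[G]}(x) + (s-1),
\]
which is exactly the conclusion. The main obstacle I anticipate is matching up the precise splittability hypothesis of [FHKVT, Corollary 3.11] with the edge gluing at hand; once that condition is confirmed (which should be automatic because one of the two pieces is a single cycle), the hypersurface calculation for $\K[C_{2s}]$ finishes the proof immediately.
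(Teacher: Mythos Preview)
Your proposal is correct and follows the same route as the paper: both treat the statement as a specialization of \cite[Corollary~3.11]{FHKVT}. The paper in fact gives no proof at all---it simply announces the theorem as the specialization of that corollary---so your write-up supplies exactly the details the paper leaves implicit (verifying that gluing along a single edge falls under the cited hypotheses, and computing $\reg(\K[C_{2s}])=\deg h_{\K[C_{2s}]}(x)=s-1$ from the hypersurface description). Your only caveat, confirming the splittability hypothesis for edge-gluing with one piece a cycle, is indeed handled in \cite{FHKVT}, so there is no gap.
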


\begin{corollary}\label{p. inductive step reg and degree}
Let $G=(V,E)$ be a connected graph with 
$\deg h_{\K[G]}(x)=d$ and $\reg(\K[G])=r$.
Then there exists a connected graph $G'=(V',E')$ with $\deg h_{\K[G']}(x)=d+1$ and $\reg(\K[G'])= r+1.$
\end{corollary}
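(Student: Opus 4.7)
The plan is to apply Theorem \ref{t. 3.9 FHKVT} with the smallest allowable even cycle, namely $C_{4}$ (so $s=2$). Concretely, I would pick any edge $e$ of $G$, pick any edge $e'$ of $C_{4}$, and form the glued graph $G' = G \cup_{\varphi} C_{4}$, where $\varphi$ identifies $e$ with $e'$.

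The verification then has three short steps. First, $G'$ is connected: it is obtained from two connected graphs $G$ and $C_{4}$ by identifying an edge (and hence both endpoints), so any two vertices of $G'$ can be joined by concatenating a walk in $G$, the identified edge, and a walk in $C_{4}$. Second, since $G$ is connected and $C_{2s} = C_{4}$ has length $2s = 4 \geq 4$, the hypotheses of Theorem \ref{t. 3.9 FHKVT} are satisfied, and we may invoke parts $(i)$ and $(ii)$ with $s = 2$. Third, plugging in $s = 2$ gives
\[
\reg(\K[G']) \;=\; \reg(\K[G]) + s - 1 \;=\; r + 1
\]
and
\[
\deg h_{\K[G']}(x) \;=\; \deg h_{\K[G]}(x) + s - 1 \;=\; d + 1,
\]
which is exactly the conclusion required.

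There is essentially no obstacle here: the corollary is a direct specialization of Theorem \ref{t. 3.9 FHKVT}, and the only thing one needs to check beyond quoting that theorem is that the glued graph remains connected, which is immediate. The content of the corollary is simply that Theorem \ref{t. 3.9 FHKVT} already hands us an inductive step that simultaneously raises both invariants by one, obtained by attaching a single $4$-cycle along an edge.
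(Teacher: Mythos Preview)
Your proof is correct and follows exactly the paper's approach: glue a $C_4$ along an edge of $G$ and invoke Theorem~\ref{t. 3.9 FHKVT} with $s=2$. The paper's proof is a single sentence to this effect; your added verification that $G'$ is connected is a harmless elaboration.
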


\begin{proof}
	By Theorem \ref{t. 3.9 FHKVT}, if we glue a $C_4$
	along any edge of $G$, we get the desired result.
\end{proof}

For all integers
$1 \leq r$, there is a graph $G$ satisfying  $\deg h_{\K[G]}(x) 
= {\rm reg}(\K[G]) = r$.

\begin{example}\label{e. deg=reg}
Consider the graph $C_4^{(r)}$, where $r\ge 1$ is an integer, on the vertex set $V^{(r)}=\{x_1, \ldots, x_{2r+2}\}$
and edge set
$E^{(r)}=\{\{x_1,x_2\}\}\cup\{ \{x_1, x_{2i+1}\}, \{x_2, x_{2i+2}\},   \{x_{2i+1},  x_{2i+2}\}\ |\ iv=1,\ldots,r \}.$
So, the graph $C_4^{(r)}$ consists of $r$ squares glued along one edge. Since, $C_4=C_4^{(1)}$ has $\deg h_{\K[C_4]}(x) = 
{\rm reg}(\K[C_4]) = 1$ then, iteratively from Corollary \ref{p. inductive step reg and degree}, we get  $\deg h_{\K[C_4^{(r)}]}(x) = 
{\rm reg}(\K[C_4^{(r)}]) = r$.   
\end{example}

\section{Some homological invariants of the toric ideal for a fixed family of graphs}\label{s. reg 4 d 5}

In this section we construct a family of simple graphs $G_{t}$ with $t\ge 2$ such that ${\rm reg}(\K[G_t]) = 4$ and 
$\deg h_{\K[G_t]}(x) = t+3$.  
By combining this family with Corollary \ref{p. inductive step reg and degree}, we can prove Theorem \ref{maintheorem}.

To help the reader, we sketch out the broad strokes that we take
in this section.  We begin by defining a graph $G_t$ on 
$t+6$ vertices and $2t+6$ edges, where $t\ge2$ is an integer.
We then describe a set $\mathcal{G}$ of binomials that form  a universal Gr\"obner basis for $I_{G_t}$
 and a set $\mathcal{M}$ of minimal generators of $\init(I_{G_t}),$ the initial ideal of $I_{G_t}$ for a given monomial ordering. We show that $\init(I_{G_t})$ has linear quotients. Lastly, we prove that all the graded Betti numbers of $\K[{G_t}]$ coincide with the ones of $\K[E_t]/\init(I_{G_t}),$ and that there exists a unique extremal Betti number. We derive Theorem \ref{maintheorem} from these facts. 
 
 We begin by formally defining the graphs of interest.
\begin{definition}\label{d.graph G_t} Let $t \geq 2$ be an integer. The graph $G_{t}$ is defined having the vertex and edge sets:
\[
V_t=\{x_{1},x_{2},y_{1},\ldots,y_{t},z_{1},z_{2},w_{1},w_{2}\}, ~\mbox{and}
\]
\[
E_t=\{ \{x_{i},y_{j}\} \mid 1 \leq i \leq 2, 1 \leq j \leq t \} \cup \{ \{x_{1},z_{1}\}, \{z_{1},z_{2}\}, \{z_{2},x_{1}\} \}\cup \{ \{x_{2},w_{1}\}, \{w_{1},w_{2}\}, \{w_{2},x_{2}\}\}.
\]
We label the edges of $G_{t}$ as follows:  $e_{1}=\{x_{1},z_{1}\}$, $e_{2}=\{z_{1},z_{2}\}$, $e_{3}=\{z_{2},x_{1}\}$, $f_{1}=\{x_{2},w_{1}\}$, $f_{2}=\{w_{1},w_{2}\}$, $f_{3}=\{w_{2},x_{2}\}$ and, for $i\in{\{1,\ldots,t\}}$, $a_{i}=\{x_{1},y_{i}\}$ and $b_{i}=\{x_{2},y_{i}\}$.
\end{definition}
Note that the subgraph of $G_t$ on the vertices $\{x_1,x_2, y_1, \ldots, y_t\}$ is a complete bipartite graph $K_{2,t}$ consisting of only the edges $\{a_1,\ldots,a_t,b_1,\ldots,b_t\}$.  Thus, less formally, the graph $G_t$ is obtained from the complete bipartite graph $K_{2,t}$ by joining a 3-cycle to
each of the two vertices of degree $t$.   See Figure \ref{fig:graph G_5} for the case
$t=5$.  Note that the toric ideals of these graphs were also considered in \cite{HHKOK}.

\begin{figure}[ht]
    \centering
\begin{tikzpicture}
 [scale=.45,auto=left,every node/.style={circle,fill=lightgray,
 scale=.5, minimum size=3.2em}]
\node (x1) at (3,6) {\text{$x_{1}$}};
\node (x2) at (9,6) {\text{$x_{2}$}};
\node (y1) at (0,0) {\text{$y_{1}$}};
\node (y2) at (3,0) {\text{$y_{2}$}};
\node (y3) at (6,0) {\text{$y_{3}$}};
\node (y4) at (9,0) {\text{$y_{4}$}};
\node (y5) at (12,0) {\text{$y_{5}$}};
\node (z1) at (1,9) {\text{$z_{1}$}};
\node (z2) at (5,9) {\text{$z_{2}$}};
\node (w1) at (7,9) {\text{$w_{1}$}};
\node (w2) at (11,9) {\text{$w_{2}$}};

\node[fill=white] (a1) at (1.6,4) {\text{$a_{1}$}};
\node[fill=white] (a2) at (2.7,4) {\text{$a_{2}$}};
\node[fill=white] (a3) at (3.7,4) {\text{$a_{3}$}};
\node[fill=white] (a4) at (4.7,4) {\text{$a_{4}$}};
\node[fill=white] (a5) at (5.2,4.8) {\text{$a_{5}$}};
\node[fill=white] (b1) at (4,2.3) {\text{$b_{1}$}};
\node[fill=white] (b2) at (5.8,2.3) {\text{$b_{2}$}};
\node[fill=white] (b3) at (7.5,2.3) {\text{$b_{3}$}};
\node[fill=white] (b4) at (9.3,2.8) {\text{$b_{4}$}};
\node[fill=white] (b5) at (10.9,2.8) {\text{$b_{5}$}};
\node[fill=white] (e1) at (1.7,7.4) {\text{$e_{1}$}};
\node[fill=white] (e2) at (3,9.3) {\text{$e_{2}$}};
\node[fill=white] (e3) at (4.3,7.4) {\text{$e_{3}$}};
\node[fill=white] (f1) at (7.7,7.4) {\text{$f_{1}$}};
\node[fill=white] (f2) at (9,9.3) {\text{$f_{2}$}};
\node[fill=white] (f3) at (10.3,7.4) {\text{$f_{3}$}};

\draw (z1) -- (z2);
\draw (z2) -- (x1);

\draw (w1) -- (w2);
\draw (w2) -- (x2);

\draw (x1) -- (z1);
\draw (x1) -- (y1);
\draw (x1) -- (y2);
\draw (x1) -- (y3);
\draw (x1) -- (y4);
\draw (x1) -- (y5);

\draw (x2) -- (w1);
\draw (x2) -- (y1);
\draw (x2) -- (y2);
\draw (x2) -- (y3);
\draw (x2) -- (y4);
\draw (x2) -- (y5);
\end{tikzpicture}
    \caption{The graph $G_5$.}
    \label{fig:graph G_5}
\end{figure}
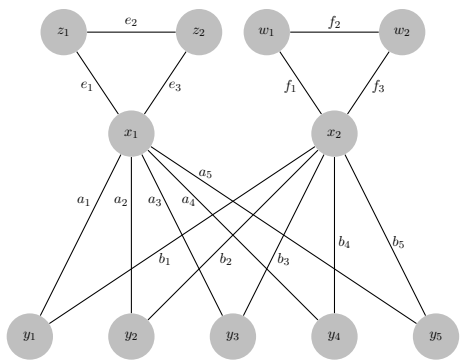

Going forward, we work in the standard graded polynomial ring $$\K[E_t] = \K[a_{1},\dots,a_{t},f_{1},f_{2},f_{3},e_{1},e_{2},e_{3},b_{1},\dots,b_{t}].$$ 
Let $>$ denote the graded reverse lexicographic monomial ordering on $\K[E_t]$  satisfying 
\begin{equation}\label{monomialorder}
a_{1}>\cdots >a_{t}>f_{1}>f_{2}>f_{3}>e_{1}>e_{2}>e_{3}>b_{1}>\cdots >b_{t}.
\end{equation} We denote the {\em initial ideal} of an ideal
$I$ with respect to this ordering by ${\rm in}(I)$.

Before focusing on $I_{G_t}$, we summarize some known results about the toric ideal of $I_{K_{2,t}}$. Here, we see  $K_{2,t}$, the complete bipartite graph, as the induced subgraph of $G_t$ on  $V':=\{x_1,x_2, y_1,\ldots, y_t\}$. 

\begin{lemma}\label{completebipartite}
Fix some integer $t \geq 2$.
Using the same labelling as in Definition \ref{d.graph G_t}, let $I_{K_{2,t}}$ be the toric ideal of the graph $K_{2,t}=(V',E')$  in the 
polynomial ring $\K[E']=\K[a_1,\ldots,a_t,b_1,\ldots,b_t]$.  Then
\begin{enumerate}
    \item[$(i)$] $I_{K_{2,t}} = \langle a_{i}b_{j}-a_{j}b_{i} ~|~ 1 \leq i < j \leq t \rangle$;
    \item[$(ii)$] ${\rm in}(I_{K_{2,t}}) = \langle a_{i}b_{j}\ |\   1 \leq j < i \leq t \rangle$ with respect to the graded reverse lexicographical order where
    $a_1 > a_2 > \cdots >a_t> b_1 > \cdots > b_t$;
    \item[$(iii)$] ${\rm in}(I_{K_{2,t}})$ has linear quotients if one
    orders the generators with respect to the graded reverse lexicographical order; and
    \item[$(iv)$] if $\{g_1,\ldots,g_k\}$ are the generators of ${\rm in}(I_{K_{2,t}})$
    ordered with respect to the graded reverse lexicographical order,
    then $n_p \leq t-1$ for all $p$, where $n_p$ is the number 
    of generators of $\langle g_1,\ldots,g_{p-1}\rangle:\langle g_p \rangle$
    for $p=1,\ldots,k$;
    \item[$(v)$]  $\beta_{i,j}(\K[K_{2,t}]) = \beta_{i,j}(\K[E']/{\rm in}(I_{K_{2,t}}))$
    for all $i,j \geq 0$.%
\end{enumerate}
\end{lemma}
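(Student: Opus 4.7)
The plan is to prove the five parts in sequence.

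Parts (i) and (ii) follow from Theorem \ref{UGB}. The primitive closed even walks in $K_{2,t}$ are exactly the $4$-cycles $(x_{1}, y_{i}, x_{2}, y_{j}, x_{1})$ for $1 \le i < j \le t$: any longer closed even walk in this bipartite graph must revisit a vertex and thereby decomposes into shorter closed even subwalks, so its binomial fails to be primitive. The $4$-cycle corresponding to the pair $\{i, j\}$ gives the binomial $a_{i} b_{j} - a_{j} b_{i}$, proving (i). For (ii), I would verify that under the specified graded reverse lex order the leading term of $a_{i} b_{j} - a_{j} b_{i}$ (with $i < j$) is $a_{j} b_{i}$: the smallest variable appearing with different exponents in the two monomials is $b_{j}$, and in grevlex the monomial having smaller exponent at the smallest differing variable is the larger one. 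Since the generators from (i) form a Gr\"obner basis (by universality), the initial ideal is the stated monomial ideal.

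For parts (iii) and (iv), I would order the minimal generators $g_{i,j} = a_{i} b_{j}$ (for $1 \le j < i \le t$) lexicographically by the pair $(i, j)$, first increasing $i$ then increasing $j$ (equivalently, in decreasing grevlex order on the monomials themselves). For a fixed $g_{i,j}$, computing $(g_{i',j'} : g_{i,j}) = g_{i',j'}/\gcd(g_{i',j'}, g_{i,j})$ over all earlier pairs produces the variables $b_{j'}$ for $1 \le j' < j$ (from the cases $i' = i$) and $a_{i'}$ for $j < i' < i$ (from the cases $j' = j$, $i' < i$), together with some degree-two contributions $a_{i'} b_{j'}$ coming from coprime quotients. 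These degree-two terms turn out to be redundant: if $j' < j$ then $a_{i'} b_{j'} \in \langle b_{j'} \rangle$, and if $j' > j$, which forces $j < j' < i' < i$, then $a_{i'} b_{j'} \in \langle a_{i'} \rangle$ with $a_{i'}$ already in our list. Hence the colon ideal equals $\langle b_{1}, \ldots, b_{j-1}, a_{j+1}, \ldots, a_{i-1} \rangle$, proving linear quotients for (iii), and supplying $n_p = (j-1) + (i - j - 1) = i - 2 \le t - 2 \le t - 1$ generators, yielding (iv).

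Part (v) would follow from Lemma \ref{equalbettinumbers}. Because $\init(I_{K_{2,t}})$ has linear quotients and all minimal generators sit in degree $2$, formula \eqref{eq. betti from quotients} shows its Betti table has a single nonzero row $j - i = 1$, i.e.\ the resolution is $2$-linear. Combined with the general inequality $\beta_{i,j}(\K[K_{2,t}]) \le \beta_{i,j}(\K[E']/\init(I_{K_{2,t}}))$, this forces $\beta_{i, i+k}(\K[K_{2,t}]) = 0 = \beta_{i, i+k}(\K[E']/\init(I_{K_{2,t}}))$ for every $k \ne 1$ and every $i \ge 1$. Applying Lemma \ref{equalbettinumbers} with $k = 1$ then upgrades this to equality for all $(i, j)$, giving (v).

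The main obstacle is the bookkeeping in part (iii): one has to verify that every coprime-quotient contribution $a_{i'} b_{j'}$ is absorbed by a variable already on the list, handling separately the cases $j' < j$ and $j' > j$. Everything else is either a direct application of Theorem \ref{UGB}, a quick grevlex comparison, or a formal consequence of Lemma \ref{equalbettinumbers}.
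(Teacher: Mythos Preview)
Your argument is essentially correct and is substantially more detailed than the paper's, which simply cites \cite[Remark 3.4]{GHKKVTP} for parts $(i)$--$(ii)$ and \cite[Corollary 2.8]{GHKKVTP} for parts $(iii)$--$(v)$.  So you are giving a self-contained proof where the authors defer to an external reference; everything you use (Theorem~\ref{UGB}, formula~\eqref{eq. betti from quotients}, Lemma~\ref{equalbettinumbers}) is already available inside the paper, which is a pedagogical gain.

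There is one inaccuracy.  Your parenthetical claim that ordering the $g_{i,j}=a_ib_j$ lexicographically by $(i,j)$ (first $i$ increasing, then $j$ increasing) coincides with decreasing grevlex on the monomials is false.  For instance, with $t\ge 4$ you place $g_{3,2}=a_3b_2$ before $g_{4,1}=a_4b_1$, but in grevlex the smallest variable at which they differ is $b_2$, and $a_4b_1$ has the smaller exponent there, so $a_4b_1>a_3b_2$; decreasing grevlex would list $g_{4,1}$ first.  (Nor is your order increasing grevlex: $a_3b_1>a_3b_2$, yet you list $g_{3,1}$ before $g_{3,2}$.)  Consequently, as written you have proved linear quotients and the bound $n_p\le t-2$ for \emph{your} ordering, not literally for the grevlex ordering named in the lemma.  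This does not damage the downstream applications, which only need some linear-quotients order on $\mathcal{M}_1$ compatible with the order on $\mathcal{M}_1\cup\mathcal{M}_2\cup\mathcal{M}_3$ in Theorem~\ref{p. linear quotient G_t}; and in fact the ordering the paper actually writes out there, namely $a_tb_{t-1},a_tb_{t-2},\dots,a_2b_1$, is exactly the reverse of yours and is likewise not a pure grevlex listing, so the mismatch is more terminological than mathematical.  Still, you should drop the ``equivalently'' and either verify linear quotients directly for the genuine grevlex order or simply state that you are exhibiting \emph{an} ordering with linear quotients.

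Parts $(i)$, $(ii)$, and $(v)$ are fine.  Your use of Lemma~\ref{equalbettinumbers} for $(v)$ is clean: linear quotients in a single degree force a $2$-linear resolution of the initial ideal, so equality holds trivially off the row $j-i=1$ and Lemma~\ref{equalbettinumbers} closes the remaining row.
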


\begin{proof}
Statements
$(i)$ and $(ii)$ follow from \cite[Remark 3.4]{GHKKVTP} which shows that
the given generators are a universal Gr\"obner basis of $I_{K_{2,t}}$.
Statements $(iii), (iv)$, and $(v)$ follow from \cite[Corollary 2.8]{GHKKVTP}.
\end{proof}

The next result describes the set of primitive binomials of $I_{G_{t}}$ which we denote by $\mathcal{G}$.

\begin{theorem}\label{l. generators toric ideal G_t}
For any integer $t\ge 2$, the ideal $I_{G_t}$ is generated by the primitive binomials  in $\mathcal G=\mathcal G_1\cup \mathcal G_2\cup \mathcal G_3$ where
\begin{enumerate}
\item[$(i)$] $\mathcal G_1= \{a_{i}b_{j}-a_{j}b_{i}\ |\ 1 \leq i < j \leq t\}$, 
\item[$(ii)$] $\mathcal G_2= \{a_{i}a_{j}f_{1}f_{3}e_{2}-f_{2}e_{1}e_{3}b_{i}b_{j}\ | \ 1 \leq i < j \leq t\}$, and
\item[$(iii)$] $\mathcal G_3=\{ a_{i}^{2}f_{1}f_{3}e_{2}-f_{2}e_{1}e_{3}b_{i}^{2}\ |\ 1 \leq i \leq t\}$.
\end{enumerate}
In particular, $\mathcal{G}$ is a universal
Gr\"obner basis for $I_{G_t}$.
\end{theorem}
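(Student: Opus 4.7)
The plan is to apply Theorem \ref{UGB}, which identifies binomials associated to primitive closed even walks with a universal Gr\"obner basis. It therefore suffices to show that, up to circular permutation and reversal, the primitive closed even walks in $G_t$ are exactly those whose associated binomials appear in $\mathcal{G}_1 \cup \mathcal{G}_2 \cup \mathcal{G}_3$; generation of $I_{G_t}$ by $\mathcal{G}$ then follows automatically.

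The first step is to exhibit, for each binomial in $\mathcal{G}$, an explicit closed even walk producing it. For $\mathcal{G}_1$, the walk is the $4$-cycle $(a_i, b_i, b_j, a_j)$ contained in the induced $K_{2,t}$. For $\mathcal{G}_2$ and $\mathcal{G}_3$, the walk is the length-$10$ closed walk
\[
(e_3,\, e_2,\, e_1,\, a_i,\, b_i,\, f_1,\, f_2,\, f_3,\, b_j,\, a_j),
\]
obtained by traversing the triangle at $x_1$, passing from $x_1$ to $x_2$ via $y_i$, traversing the triangle at $x_2$, and returning via $y_j$ (allowing $i=j$ to obtain $\mathcal{G}_3$). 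Primitivity of each associated binomial is verified by showing that every pair of monomials $(g_1, g_2)$ with $g_1 \mid f_1$, $g_2 \mid f_2$, and $\varphi(g_1) = \varphi(g_2)$ coincides with the original pair; matching exponents of $\varphi(g_1)$ and $\varphi(g_2)$ at each vertex reduces this to a straightforward linear bookkeeping argument.

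The core step is to show that no other primitive walks exist. Because $x_1$ and $x_2$ are cut vertices separating the two triangles from the bipartite part, any closed walk decomposes into excursions into the $e$- and $f$-triangles (each entering and leaving at the cut vertex) together with walks in $K_{2,t}$. The key combinatorial claim is that in a primitive walk each triangle is either unused or traversed exactly once around its $3$-cycle: a back-and-forth excursion along a single triangle edge would force that edge to appear in both monomials of $f_w$, so $f_w$ would be divisible by a smaller element of $I_{G_t}$; multiple complete traversals would likewise produce a proper sub-binomial. Since each triangle contributes an odd length-$3$ excursion and the total walk length is even, either both triangles are used exactly once or neither is used. In the ``neither'' case $w$ lies in $K_{2,t}$, so by Lemma \ref{completebipartite}$(i)$ we have $f_w \in \mathcal{G}_1$; in the ``both'' case, primitivity applied to the $K_{2,t}$ portion (again via Lemma \ref{completebipartite}$(i)$, which rules out any internal $4$-cycle subwalk) forces the inter-triangle connection to consist of exactly two length-$2$ paths $x_1$-$y_i$-$x_2$ and $x_2$-$y_j$-$x_1$, yielding $\mathcal{G}_2$ or $\mathcal{G}_3$.

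The main obstacle is the combinatorial claim controlling triangle traversals and the minimality of the $K_{2,t}$ connection. Both must be deduced from the definition of primitivity (no smaller binomial of $I_{G_t}$ divides $f_w$) rather than from any general structural classification, and the delicate part is to exhibit, for any candidate walk that does not match the above pattern, an explicit proper sub-binomial in $I_{G_t}$ dividing $f_w$. The use of Lemma \ref{completebipartite}$(i)$ to supply these sub-binomials inside the bipartite part is essential.
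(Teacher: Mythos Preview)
Your overall strategy matches the paper's: both invoke Theorem~\ref{UGB} and reduce the problem to listing the primitive closed even walks of $G_t$ up to circular permutation, and both exhibit the same three families of walks. Where the two proofs diverge is in the completeness step. The paper does not argue this directly at all; it simply cites \cite{HHKOK} (the discussion preceding their Lemma~2.1) for the fact that the listed walks exhaust the primitive closed even walks of $G_t$. By contrast, you give a self-contained combinatorial argument based on the cut-vertex structure at $x_1$ and $x_2$: you classify excursions into each triangle, use parity to force either both triangles or neither, and then appeal to Lemma~\ref{completebipartite}$(i)$ to control the bipartite portion. Your route is more informative and independent of the literature; the paper's route is shorter but relies on an external reference.

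Your plan is sound, but be aware that the assertion ``multiple complete traversals would likewise produce a proper sub-binomial'' needs a careful case analysis when the two traversals of a triangle are separated by a detour through the rest of the graph (the parity of the intervening segment determines whether the two traversals contribute to the same or opposite monomials, and you need both cases to yield a common factor or an explicit divisor in $I_{G_t}$). This is true, but it is the one place where your sketch is thinnest, so make sure the final write-up handles non-consecutive triangle traversals explicitly; alternatively, you could shortcut the whole analysis by invoking the Ohsugi--Hibi/Tatakis--Thoma structural description of primitive walks, which immediately gives ``two odd cycles joined by a path'' as the only non-cycle option in $G_t$.
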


\begin{proof} 
By Theorem \ref{UGB}, it suffices to show that the binomials in $\mathcal{G}$ correspond to
the primitive closed even walks in $G_t$. We only need to identify these even walks up to 
a circular permutation, since the associated binomials
will be equal up to a sign.

Note that the elements of 
$\mathcal{G}$ correspond to the following
closed even walks in the graph $G_t$:
\begin{itemize}
    \item $(a_i,b_i,b_j,a_j)$, where $1 \leq i < j \leq t$, 
    \item $(a_i,b_i, f_1, f_2, f_3,b_j,a_j, e_1, e_2, e_3 )$, where $1 \leq i < j \leq t$, and
    \item $(a_i,b_i, f_1, f_2, f_3,b_i,a_i, e_1, e_2, e_3 )$  where $1 \leq i \leq t$.  
 \end{itemize}   
 However, as noted in \cite{HHKOK} (prior to Lemma 2.1), these
 closed even walks form a complete set of primitive
 closed even walks.
\end{proof}

\begin{corollary}\label{mono}
Using the graded reverse lexicographic order that satisfies \eqref{monomialorder}, 
we have that ${\rm in}(I_{G_{t}})$ is generated by the monomials in $\mathcal{M}=\mathcal{M}_{1}\cup\mathcal{M}_{2}\cup\mathcal{M}_{3}$  where:
\begin{enumerate}
\item[$(i)$] $\mathcal M_1= \left\{
     a_{i}b_{j}\ |\   1 \leq j < i \leq t \right\}$,
\item[$(ii)$] $\mathcal M_2= \left\{
a_{i}a_{j}f_{1}f_{3}e_{2} \ |\   1 \leq i < j \leq t \right\}$, and 
\item[$(iii)$] $\mathcal M_3=\left\{ a_{i}^{2}f_{1}f_{3}e_{2}\ | \ 1 \leq i  \leq t \right\}$.
\end{enumerate}
Furthermore, $\mathcal{M}$ is a minimal set of generators for ${\rm in}(I_{G_{t}})$.
\end{corollary}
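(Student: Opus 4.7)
The plan is to derive this as a routine consequence of Theorem \ref{l. generators toric ideal G_t}. Since that theorem asserts $\mathcal{G} = \mathcal{G}_1 \cup \mathcal{G}_2 \cup \mathcal{G}_3$ is a \emph{universal} Gröbner basis of $I_{G_t}$, the initial ideal $\init(I_{G_t})$ with respect to the grevlex order in \eqref{monomialorder} is generated by the leading monomials of the binomials in $\mathcal{G}$. Thus the proof reduces to two tasks: (a) identifying the leading term of each binomial in $\mathcal{G}_1$, $\mathcal{G}_2$, $\mathcal{G}_3$, and (b) verifying that the resulting set $\mathcal{M}$ is a minimal generating set of $\init(I_{G_t})$.

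For task (a), I would carry out the grevlex comparisons one family at a time, using the standard characterization that, for monomials $u$ and $v$ of equal degree, $u >_{\mathrm{grevlex}} v$ iff the last nonzero entry of the exponent vector $\exp(u)-\exp(v)$ (read in the variable order) is negative. For a binomial $a_i b_j - a_j b_i \in \mathcal{G}_1$ with $i<j$, the last nonzero coordinate of the exponent difference $\exp(a_i b_j) - \exp(a_j b_i)$ occurs at the variable $b_j$ with value $+1$, so $a_j b_i$ is the leading term; reindexing by setting $(i',j'):=(j,i)$ gives exactly the monomials $a_{i'}b_{j'}$ with $j'<i'$ listed in $\mathcal{M}_1$. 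For binomials in $\mathcal{G}_2$ and $\mathcal{G}_3$, the $b$-variables appear only on one side, and since they sit at the bottom of the order, the last nonzero coordinate of the exponent difference lies at a $b$-variable whose contribution makes the $a$-side larger; consequently $a_ia_j f_1f_3 e_2$ and $a_i^2 f_1 f_3 e_2$ are the respective leading terms, producing $\mathcal{M}_2$ and $\mathcal{M}_3$.

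For task (b), minimality amounts to showing no monomial of $\mathcal{M}$ divides another. Within $\mathcal{M}_1$ the generators are distinct squarefree degree-$2$ monomials with different support. Within $\mathcal{M}_2$ and $\mathcal{M}_3$ the generators all share the common factor $f_1f_3e_2$, so divisibility would force divisibility on the $a$-part, which is impossible since the $a$-parts are either distinct pairs $a_ia_j$, distinct squares $a_i^2$, or mismatched between the two patterns (a square $a_i^2$ cannot divide a squarefree $a_{i'}a_{j'}$ with $i'<j'$, and vice versa). Across families, divisibility between $\mathcal{M}_1$ and $\mathcal{M}_2\cup\mathcal{M}_3$ is ruled out on degree grounds in one direction and by noting that elements of $\mathcal{M}_2\cup\mathcal{M}_3$ contain no $b$-variable (so no $a_ib_j$ divides them) in the other.

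I do not anticipate any real obstacle here: the bookkeeping of the grevlex comparison is the only nontrivial step, but it is essentially forced by the placement of $b_1,\ldots,b_t$ at the tail of the variable order. The corollary could therefore be presented concisely, with the leading-term computation for one representative binomial from each $\mathcal{G}_k$ and a short paragraph on minimality.
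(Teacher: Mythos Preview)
Your proposal is correct and follows exactly the paper's approach: invoke Theorem~\ref{l. generators toric ideal G_t} to conclude that $\init(I_{G_t})$ is generated by the leading terms of $\mathcal{G}$, then observe that no monomial in $\mathcal{M}$ divides another. The paper's proof is in fact a two-sentence version of what you wrote, leaving the grevlex leading-term computations and the divisibility check implicit; your more detailed write-up of those steps is accurate and would be welcome if the paper were aimed at a less expert audience.
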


\begin{proof}
That $\mathcal{M}$ is a generating set with respect to the given order follows from Theorem \ref{l. generators toric ideal G_t}. That it is minimal follows from the fact that none of the monomials are divided by any of the others.
\end{proof}

We will show ${\rm in}(I_{G_t})$ has  linear quotients with respect to an order of its generators.

\begin{theorem}\label{p. linear quotient G_t}
Let $\mathcal{M}_1, \mathcal{M}_2$ and $\mathcal{M}_3$ be as in Corollary
\ref{mono}, and order each set from smallest to largest with
respect to the graded reverse lexicographical order.  
Then the initial ideal of $I_{G_t}$
 \begin{align*}
{\rm in}(I_{\text{G}_{t}})=\left\langle\right. & a_{t}b_{t-1},\  a_{t}b_{t-2},\dots,  a_{t-1}b_{t-2}, \dots,  a_{2}b_{1}, \\
& a_{t}a_{t-1}f_{1}f_{3}e_{2},\ a_{t}a_{t-2}f_{1}f_{3}e_{2}, \dots, a_{2}a_{1}f_{1}f_{3}e_{2}, \\
& a_{t}^{2}f_{1}f_{3}e_{2},\ a_{t-1}^{2}f_{1}f_{3}e_{2}, \dots, a_{1}^{2}f_{1}f_{3}e_{2}\left.\right\rangle
\end{align*}
has linear quotients with respect to this order of the generators.
Furthermore, if ${\rm in}(I_{G_t}) = \langle g_1,\ldots,g_{t^2} \rangle$,
and $n_p$ is the number of generator of $\langle g_1,\ldots,g_{p-1} \rangle:
\langle g_p \rangle$, then 
$$\max\{n_p ~|~ 2 \leq p \leq t^2\} = 2t-2.$$
\end{theorem}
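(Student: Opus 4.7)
The plan is to verify, for each generator $g_p$ in the given ordering, that
\[
J_p := \langle g_1, \ldots, g_{p-1}\rangle : \langle g_p\rangle
\]
is generated by a set of variables, and then to count them. Since $J_p$ is generated by the monomial quotients $g_i/\gcd(g_i, g_p)$ for $i < p$, I need to check that each such quotient is either itself a variable or is divisible by a variable quotient. The analysis splits naturally according to whether $g_p$ lies in $\mathcal{M}_1$, $\mathcal{M}_2$, or $\mathcal{M}_3$, and in each case I would explicitly identify the variable generators of $J_p$.

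For $g = a_k b_l \in \mathcal{M}_1$, the predecessors in $\mathcal{M}_1$ satisfy either $k' > k$, or $k' = k$ and $l' > l$, and a direct enumeration shows the variable quotients are exactly $\{b_{l'} : l < l' < k\}$ (from $k' = k$) and $\{a_{k'} : k < k' \le t\}$ (from $l' = l$), while every remaining quotient $a_{k'} b_{l'}$ necessarily has $k' > k$ and is absorbed by $a_{k'}$. For $g = a_k a_l f_1 f_3 e_2 \in \mathcal{M}_2$ with $k > l$, the $\mathcal{M}_1$ predecessors contribute the variable quotients $\{b_{l''} : 1 \le l'' < k\}$ (arising whenever $k'' \in \{k, l\}$), and the preceding Block 2 generators, analysed by $|\{k', l'\} \cap \{k, l\}|$, contribute $\{a_{l'} : l < l' < k\} \cup \{a_{k'} : k < k' \le t\}$ in the singleton case, while the disjoint case is forced by the ordering to satisfy $k' > k$ so that $a_{k'}$ absorbs the quotient $a_{k'} a_{l'}$; leftover non-variable quotients $a_{k''} b_{l''}$ from $\mathcal{M}_1$ are absorbed by a suitable $b_{l''}$ (if $l'' < k$) or by $a_{k''}$ (if $l'' \ge k$, which forces $k'' > k$). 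For $g = a_k^2 f_1 f_3 e_2 \in \mathcal{M}_3$, the $\mathcal{M}_1$ predecessors produce $\{b_l : 1 \le l < k\}$, the $\mathcal{M}_2$ predecessors with $k \in \{k', l'\}$ produce the full set $\{a_m : 1 \le m \le t,\; m \neq k\}$, and the earlier Block 3 predecessors $a_{k'}^2 f_1 f_3 e_2$ (with $k' > k$) contribute $a_{k'}^2$, which is absorbed by the already-present $a_{k'}$; all remaining non-variable quotients are absorbed by one of the listed $a$- or $b$-variables. Counting yields $n_p = t - l - 1 \le t - 2$ in Block 1, $n_p = t + k - l - 2 \le 2t - 3$ in Block 2 (maximized at $(k, l) = (t, 1)$), and $n_p = t + k - 2 \le 2t - 2$ in Block 3 (maximized at $k = t$), so $\max\{n_p : 2 \le p \le t^2\} = 2t - 2$, attained at $g = a_t^2 f_1 f_3 e_2$.

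The main obstacle I anticipate is the Block 2 bookkeeping, where the ordering condition ($k' > k$, or $k' = k$ and $l' > l$) must be reconciled carefully with the intersection condition $|\{k', l'\} \cap \{k, l\}| \in \{0, 1\}$, particularly to confirm that the disjoint intersection case automatically forces $k' > k$ and hence that $a_{k'}$ is available in $J_p$ to absorb the non-variable quotient $a_{k'}a_{l'}$. The Block 3 step is structurally simpler because the single repeated index in $a_k^2$ makes all gcd computations uniform, and Block 1 is a mild variation of the analogous statement for $K_{2,t}$ recalled in Lemma~\ref{completebipartite}.
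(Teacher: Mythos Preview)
Your approach is essentially the paper's: a case analysis computing each colon ideal $J_p$ explicitly according to whether $g_p$ lies in $\mathcal{M}_1$, $\mathcal{M}_2$, or $\mathcal{M}_3$ (the paper defers Block~1 to Lemma~\ref{completebipartite} while you do it by hand, and organizes Block~2 slightly differently, but the computations and the resulting $n_p$-counts agree).

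One small slip to fix: your stated Block~1 predecessor condition (``$k'>k$, or $k'=k$ and $l'>l$'') is not the graded reverse lexicographic order on $\{a_ib_j\}$. Since $b_t$ is the smallest variable, the correct description of the predecessors of $a_kb_l$ is ``$l'>l$, or $l'=l$ and $k'>k$''. Consequently your claim that every leftover Block~1 quotient $a_{k'}b_{l'}$ has $k'>k$ is false (e.g.\ for $t=4$, the monomial $a_3b_2$ precedes $a_4b_1$ and contributes the quotient $a_3b_2$). Fortunately each such extra quotient has $l<l'<k'<k$, hence is divisible by a $b_{l'}$ already in your list, so your description of $J_p$ and the value $n_p=t-l-1$ remain correct; only the justification needs this one-line patch.
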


\begin{proof}
It follows from Corollary \ref{mono} that ${\rm in}(I_{G_t})$ has
$t^2$ generators.   Let $g_1,\ldots,g_{t^2}$ be these generators, ordered
as in the statement of the theorem.  For each $p \in \{2,\ldots,t^2\}$, let
$I(p) = \left\langle g_{1},\ldots,g_{p-1}\right\rangle : 
\left\langle  g_{p} \right\rangle$. 
A generating set of $I(p)$ is given by:
\begin{equation}\label{quotientformula}
I(p) = \left\langle \frac{LCM(g_1,g_p)}{g_p}, \frac{LCM(g_2,g_p)}{g_p}, \ldots, \frac{LCM(g_{p-1},g_p)}{g_p}\right\rangle.
\end{equation}

We first observe that the first $\frac{t(t-1)}{2}$ generators of
${\rm in}(I_{G_t})$ with respect to our ordering are the exact same as the
generators of ${\rm in}(I_{K_{2,t}})$ by Lemma \ref{completebipartite} $(ii)$. 
So, by Lemma \ref{completebipartite} $(iii)$, since this order has linear
quotients, $I(p)$ is generated by variables for $p=2,\ldots,\frac{t(t-1)}{2}$.

It suffices
to show that $I(p)$ is generated by variables for 
$p \in \left\{\frac{t(t-1)}{2}+1,\ldots,
t^2\right\}$.  We consider two cases.

\noindent   
{\bf Case 1.} Suppose that $g_{p}= a_{i}a_{j}f_{1}f_{3}e_{2}$ with 
$t \geq i>j \geq 1$.  Then the ideal $I(p)$ is 
 \[=
 \begin{cases}
 \left\langle a_{t}b_{t-1}, a_{t}b_{t-2}, \dots, a_{2}b_{1} \right\rangle:
 \langle a_{t}a_{t-1}f_1f_3e_2 \rangle & \mbox{if $i=t$ and $j=t-1$} \\
 \left\langle a_{t}b_{t-1}, a_{t}b_{t-2}, \dots, a_{2}b_{1},a_ta_{t-1}f_1f_3e_2,
 \ldots,a_{j+2}a_{j+1}f_1f_3e_2 \right\rangle: \langle a_{t}a_{j}f_1f_3e_2 \rangle
 & \mbox{if $i=t$ and $1 \leq j < t-1$} \\
 \left\langle a_{t}b_{t-1}, a_{t}b_{t-2}, \dots, a_{2}b_{1},a_ta_{t-1}f_1f_3e_2,
 \ldots,a_{i+1}a_{j}f_1f_3e_2\right\rangle: \langle a_{i}a_{j}f_1f_3e_2 \rangle
 & \mbox{if $i<t$}.
 \end{cases}
 \]
 If we calculate each ideal using \eqref{quotientformula}, we get
 \[
 I(p) =
 \begin{cases}
 \left\langle b_{1},\ldots,b_{t-1}  \right\rangle & \mbox{if $i=t$ and $j=t-1$} \\
 \left\langle b_{1},\ldots,b_{t-1},a_{j+1},\ldots,a_{t-1} \right\rangle
 & \mbox{if $i=t$ and $1 \leq j < t-1$} \\
 \left\langle b_1,\ldots,b_{i-1},a_{j+1},\ldots,a_{i-1},a_{i+1},\ldots, a_t \right\rangle
 & \mbox{if $i<t$.}
 \end{cases}
 \]
  
\noindent
{\bf Case 2.} If $g_p =a_i^2f_1f_3e_2$, then
\[
I(p) = 
\begin{cases}
\langle a_tb_{t-1},\ldots,a_2a_1f_1f_3e_2 \rangle:\langle a_t^2f_1f_3e_2 \rangle
& \mbox{if $i=t$} \\
\langle a_tb_{t-1},\ldots,a_2a_1f_1f_3e_2,a_1^2f_1f_3e_2,\ldots,
a_{i+1}^2f_1f_3e_2\rangle:\langle a_i^2f_1f_3e_2 \rangle
& \mbox{if $1 \leq i < t$.} \\
\end{cases}
\]
Computing each colon ideal gives
\[
I(p) = 
\begin{cases}
\langle b_1,\ldots,b_{t-1},a_1,\ldots,a_{t-1}\rangle
& \mbox{if $i=t$} \\
\langle b_1,\ldots,b_{i-1},a_1,\ldots,a_{i-1},a_{i+1},\ldots,a_t \rangle
& \mbox{if $1 \leq i < t$.} \\
\end{cases}
\]
It thus follows that ${\rm in}(I_{G_t})$ has linear quotients with respect to the given order.

To prove the final statement, it follows that $n_p \leq t-1$ for $p=2,\ldots,\frac{t(t-1)}{2}$ by Lemma \ref{completebipartite} $(iv)$.
On the other hand, from our above computations, we saw that
\[\langle a_tb_{t-1},\ldots,a_2a_1f_1f_3e_2 \rangle:\langle a_t^2f_1f_3e_2 \rangle =
\langle b_1,\ldots,b_{t-1},a_1,\ldots,a_{t-1}\rangle\]
has $2t-2$ generators, and every ideal $I(p)$ 
with $\frac{t(t-1)}{2}+1 \leq p \leq  t^2$ has 
$n_p \leq 2t-2$.
\end{proof}

\begin{corollary}\label{samebettinumbers}
For any integer $t \geq 2$, we have 
$\beta_{i,i+j}(\K[G_t]) = \beta_{i,i+j}(\K[E_t]/{\rm in}(I_{G_t})) ~~\mbox{for all
$i,j \geq 0$}.$
\end{corollary}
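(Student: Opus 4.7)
The plan is to invoke Lemma~\ref{equalbettinumbers} with $k=4$; this reduces the claim to verifying $\beta_{i,i+j}(\K[G_t])=\beta_{i,i+j}(\K[E_t]/\init(I_{G_t}))$ for every $i\ge 0$ and every $j\neq 4$.

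I would first observe that by Corollary~\ref{mono}, the minimal generators of $\init(I_{G_t})$ lie in degrees $2$ and $5$ only. Combined with the linear quotients structure established in Theorem~\ref{p. linear quotient G_t} and the formula \eqref{eq. betti from quotients}, this forces $\beta_{i,i+j}(\K[E_t]/\init(I_{G_t}))=0$ whenever $j\notin\{0,1,4\}$. The standard Gr\"obner inequality $\beta_{i,j}(\K[G_t])\le\beta_{i,j}(\K[E_t]/\init(I_{G_t}))$ transports this vanishing to $\K[G_t]$, so equality for $j\notin\{0,1,4\}$ is trivial, and the case $j=0$ only asserts $\beta_{0,0}=1$ on both sides.

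The remaining row is $j=1$. Here the key point is that the linear-strand Betti numbers $\beta_{i,i+1}(R/I)$ of any homogeneous ideal $I\subseteq R$ with $I_0=I_1=0$ depend only on the degree-$2$ piece $I_2$, because computing $\mathrm{Tor}_i^R(R/I,\K)_{i+1}$ from the Koszul resolution of $\K$ reduces to the three-term complex
\[
\Lambda^{i+1}R_1\otimes_\K \K\;\longrightarrow\; \Lambda^i R_1\otimes_\K R_1 \;\longrightarrow\; \Lambda^{i-1}R_1\otimes_\K (R_2/I_2),
\]
which involves $I$ only through the quotient $R_2/I_2$. Applying this with $I=I_{G_t}$ and with $I=\init(I_{G_t})$ in $R=\K[E_t]$: the degree-$2$ parts are the ideals generated by $\mathcal G_1$ and $\mathcal M_1$ respectively, and these are the extensions to $\K[E_t]$ of $I_{K_{2,t}}$ and $\init(I_{K_{2,t}})$. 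Since graded Betti numbers are preserved under polynomial extension, Lemma~\ref{completebipartite}(v) yields
\[
\beta_{i,i+1}(\K[G_t])=\beta_{i,i+1}(\K[K_{2,t}])=\beta_{i,i+1}(\K[E']/\init(I_{K_{2,t}}))=\beta_{i,i+1}(\K[E_t]/\init(I_{G_t}))
\]
for every $i\ge 0$, as required.

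With equality established for every $j\neq 4$, Lemma~\ref{equalbettinumbers} supplies the missing row $j=4$ for free, completing the argument. The subtle step is the ``linear strand depends only on $I_2$'' reduction: one must verify that no higher-degree generator can contribute to the relevant Koszul homology in this strand, which is precisely what the three-term complex above ensures because its outer terms are independent of $I$ and its rightmost term sees $I$ only through $R_2/I_2$.
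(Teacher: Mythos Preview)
Your proof is correct and follows essentially the same architecture as the paper's: vanish the rows $j\notin\{1,4\}$ via linear quotients and the Gr\"obner inequality, match the $j=1$ row to the $K_{2,t}$ case and invoke Lemma~\ref{completebipartite}(v), then finish with Lemma~\ref{equalbettinumbers}. The one substantive difference is that the paper simply asserts $\beta_{i,i+1}(\K[G_t])=\beta_{i,i+1}(\K[K_{2,t}])$ from the coincidence of degree-two generators, whereas you supply the Koszul-complex justification that the linear strand depends only on $I_2$; your version is thus a bit more self-contained on exactly the step the paper treats as folklore.
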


\begin{proof}
Recall that we have $\beta_{i,i+j}(\K[G_t]) \leq 
\beta_{i,i+j}(\K[E_t]/{\rm in}(I_{G_t}))$ for all $i,j \geq 0$.  
Because ${\rm in}(I_{G_t})$ has linear
quotients and is only generated in degrees $2$ and $5$,
formula \eqref{eq. betti from quotients} thus gives 
\[\beta_{i,i+j}(\K[G_t]) = \beta_{i,i+j}(\K[E_t]/{\rm in}(I_{G_t})) = 0 ~~\mbox{for all
$i \geq 0$ and all
$j \neq 1,4$.}\]

On the other hand, the generators of $I_{G_t}$ of degree two
are the exact same as the generators of~$I_{K_{2,t}}$ by
Theorem \ref{l. generators toric ideal G_t} and Lemma \ref{completebipartite}.
So $\beta_{i,i+1}(\K[G_t]) = \beta_{i,i+1}(\K[K_{2,t}])$ for all $i \geq 0$. 
The minimal generators of ${\rm in}(I_{G_t})$ 
of degree $2$ are also the minimal generators of ${\rm in}(I_{K_{2,t}})$.  So
\[\beta_{i,i+1}(\K[K_{2,t}]) = \beta_{i,i+1}(\K[G_t])
\leq \beta_{i,i+1}(\K[E_t]/{\rm in}(I_{G_t})) = \beta_{i,i+1}(\K[E_t]/{\rm in}(I_{K_{2,t}})) 
= \beta_{i,i+1}(\K[{K_{2,t}}])\]
where the last inequality is Lemma \ref{completebipartite} $(v)$.
So we have shown that $\beta_{i,i+j}(\K[G_t]) = \beta_{i,i+j}(R/{\rm in}(I_{G_t}))$
for all $i, j \geq 0$ except $j=4$. To complete the proof, we now apply Lemma 
\ref{equalbettinumbers}.
\end{proof}

\begin{remark}
It is possible to find an explicit formula
for $\beta_{i,i+j}(\K[E_t]/{\rm in}(I_{G_i}))$ using
the formula~\eqref{eq. betti from quotients},
and determining the exact values of $n_p$ for 
each $p$.  These
values can be
extracted from the proof of Theorem \ref{p. linear quotient G_t} and \cite[Theorem 3.6]{GHKKVTP}. 
\end{remark}
\begin{corollary}\label{extremalbetti}
For any integer $t \geq 2$,
$\beta_{2t-1,2t+3}(\K[G_t])$ is the  unique extremal Betti number of~$\K[{G_t}]$.
\end{corollary}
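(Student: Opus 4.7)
The plan is to parlay Corollary \ref{samebettinumbers} into a direct computation of the ``shape'' of the Betti table of $\K[G_t]$ via the linear-quotients formula \eqref{eq. betti from quotients}, applied to the ordering of Theorem \ref{p. linear quotient G_t}, and then verify extremality and uniqueness combinatorially. Since every minimal generator of ${\rm in}(I_{G_t})$ listed in Corollary \ref{mono} has degree either $2$ or $5$, formula \eqref{eq. betti from quotients} immediately forces
\[
\beta_{i,j}(\K[G_t]) = \beta_{i,j}(\K[E_t]/{\rm in}(I_{G_t})) = 0 \quad \text{whenever } j - i \notin \{1,4\},
\]
so the Betti table of $\K[G_t]$ has only two nonzero rows.

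Next, I would locate the rightmost nonzero entry in each of these two rows. For the $j-i=1$ row, the degree-$2$ minimal generators of ${\rm in}(I_{G_t})$ coincide with those of ${\rm in}(I_{K_{2,t}})$, and Lemma \ref{completebipartite}(iv) gives $n_p \leq t-1$ for each such $p$, with equality attained at the last such generator $a_t b_{t-1}$. Formula \eqref{eq. betti from quotients} then places the rightmost nonzero entry in this row at $\beta_{t,t+1}(\K[G_t])$. For the $j-i=4$ row, Theorem \ref{p. linear quotient G_t} establishes that $\max\{n_p \mid \deg g_p = 5\} = 2t-2$, realized by the generator $a_t^2 f_1 f_3 e_2$, so the rightmost nonzero entry is $\beta_{2t-1,2t+3}(\K[G_t])$.

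Finally, I would verify extremality directly from the definition. For $\beta_{2t-1,2t+3}$: any pair $(i,j)$ with $i \geq 2t-1$, $j > 2t+3$, and $j-i \geq 4$ forces $j - i \in \{1,4\}$, hence $j - i = 4$ and $i > 2t-1$, but the $j-i=4$ strand is zero beyond $i = 2t-1$, so $\beta_{i,j} = 0$; thus $\beta_{2t-1,2t+3}$ is extremal. For uniqueness, the only other candidate is $\beta_{t,t+1}$ (the rightmost entry of the $j-i=1$ row), but taking $(i,j) = (2t-1, 2t+3)$ gives $i = 2t-1 \geq t$, $j = 2t+3 > t+1$, and $j-i = 4 \geq 1$ with $\beta_{i,j} \neq 0$, violating the extremality condition at $(t,t+1)$. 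Hence $\beta_{2t-1,2t+3}$ is the unique extremal Betti number.

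The main (mild) obstacle is making sure that the stated maxima $t-1$ and $2t-2$ of the $n_p$'s are actually attained, so the corresponding binomial coefficients $\binom{n_p}{t-1}$ and $\binom{n_p}{2t-2}$ contribute nontrivially; both follow immediately by pointing to the specific generators $a_tb_{t-1}$ and $a_t^2 f_1 f_3 e_2$ identified in the proof of Theorem \ref{p. linear quotient G_t}. All remaining arithmetic is routine, with the heavy combinatorial and homological work already encoded in Theorem \ref{p. linear quotient G_t} and Corollary \ref{samebettinumbers}.
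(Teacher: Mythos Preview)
Your approach is essentially the same as the paper's: reduce to ${\rm in}(I_{G_t})$ via Corollary~\ref{samebettinumbers}, observe that the Betti table has only the two strands $j-i\in\{1,4\}$, use $n_p\le t-1$ on the degree-$2$ generators and $\max n_p=2t-2$ on the degree-$5$ generators, and compare the rightmost entries.

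There is, however, a slip in your treatment of the linear strand. In the ordering of Theorem~\ref{p. linear quotient G_t} the monomial $a_tb_{t-1}$ is the \emph{first} generator, not the last, so its associated $n_p$ equals $0$; and Lemma~\ref{completebipartite}$(iv)$ only gives the inequality $n_p\le t-1$, not that this bound is attained (in fact for small $t$ one checks the maximum among degree-$2$ generators is strictly smaller). Consequently your claim that the rightmost nonzero entry in the $j-i=1$ row is $\beta_{t,t+1}$ is not justified. Fortunately this does not damage the argument: for uniqueness you only need that the $j-i=1$ row vanishes for $i>t$, which already follows from $n_p\le t-1$, and your witness $(2t-1,2t+3)$ then rules out extremality of whatever the actual rightmost entry in that row is, since $2t-1\ge t$. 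The paper's proof avoids the issue by using only the upper bound and never asserting where the linear strand ends.
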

\begin{proof} 
By Corollary \ref{samebettinumbers}, it suffices to
show that $\beta_{2t-1,2t+3}(\K[E_t]/{\rm in}(I_{G_t}))$ is the unique extremal graded Betti number of $\K[E_t]/{\rm in}(I_{G_t})$.    Since the ideal ${\rm in}(I_{G_t})$ is generated in
degrees two and five, and because this ideal has linear quotients, any
extremal Betti number will have the form $\beta_{i,i+1}(\K[E_t]/{\rm in}(I_{G_t}))$ or $\beta_{i,i+4}(\K[E_t]/{\rm in}(I_{G_t}))$, where $i\ge 1$.  By Lemma \ref{completebipartite} $(iv)$ and 
formula \eqref{eq. betti from quotients} 
$\beta_{i,i+1}(\K[E_t]/{\rm in}(I_{G_t}))=0$ if $i \geq t$ since 
$n_p \leq t-1$ in this range.  On the other hand, since $n_p = 2t-2$
for a generator of degree five (by Theorem \ref{p. linear quotient G_t}),
and because this is the maximal such value for $n_p$, we have
$\beta_{2t-1,2t+3}(\K[E_t]/{\rm in}(I_{G_t})) \neq 0$ but $\beta_{i,i+4}(\K[E_t]/{\rm in}(I_{G_t})) =
0$ for all $i \geq 2t-1$.  Since $2t-1 \geq t-1$ because $t \geq 2$,
$\beta_{2t-1,2t+3}(\K[E_t]/{\rm in}(I_{G_t}))$ is the unique extremal graded Betti number.
\end{proof}

We can now compute the regularity and the degree of the $h$-polynomial of $\K[{G_t}]$.
\begin{theorem}\label{t. reg and deg G_t}
For any integer $t \geq 2$, the graph $G_t$ 
has $\reg (\K[G_t])= 4$ and $\deg h_{\K[G_t]}(x)=t+3$.
\end{theorem}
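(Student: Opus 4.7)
The plan is to combine Corollary \ref{extremalbetti} with Lemma \ref{. reg deg 1 extremal betti}. Corollary \ref{extremalbetti} identifies $\beta_{2t-1,2t+3}(\K[G_t])$ as the unique extremal Betti number of $\K[G_t]$, so Lemma \ref{. reg deg 1 extremal betti} applied with $(a,b) = (2t-1, 2t+3)$ immediately yields
\[
\reg(\K[G_t]) = (2t+3)-(2t-1) = 4
\]
and reduces the computation of the $h$-polynomial degree to the identity
\[
\deg h_{\K[G_t]}(x) = (2t+3) - \dim \K[E_t] + \dim \K[G_t].
\]

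All that remains is to evaluate the two Krull dimensions. Counting edges in Definition \ref{d.graph G_t}, I get $|E_t| = 2t+6$, so $\dim \K[E_t] = 2t+6$. For $\dim \K[G_t]$, I would appeal to the standard fact (see, e.g., Villarreal \cite{V}) that for a connected non-bipartite graph $G$, the edge subring $\K[G]$ has Krull dimension $|V(G)|$. The graph $G_t$ is manifestly connected, and it is non-bipartite since it contains the odd cycles $(x_1,z_1,z_2)$ and $(x_2,w_1,w_2)$. Hence $\dim \K[G_t] = |V_t| = t+6$.

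Substituting these values gives $\deg h_{\K[G_t]}(x) = (2t+3) - (2t+6) + (t+6) = t+3$, which is what we want. There is essentially no obstacle at this stage: all the substantive work was done in Corollaries \ref{samebettinumbers} and \ref{extremalbetti}, and the final step is just bookkeeping with the dimension formula for the edge ring of a connected non-bipartite graph.
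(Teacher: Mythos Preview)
Your proof is correct and takes exactly the same approach as the paper: combine Corollary \ref{extremalbetti} with Lemma \ref{. reg deg 1 extremal betti} and then plug in the two Krull dimensions. Your vertex count $|V_t|=t+6$ is in fact the correct value---the paper's printed proof contains the typo $|V(G_t)|=2t+4$, which would not yield the stated degree $t+3$.
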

\begin{proof}
This results follows  by combining Corollary \ref{extremalbetti}
and Lemma \ref{. reg deg 1 extremal betti},
and using the fact that
$\dim \K[E_t] = 2t+6$ and $\dim(\K[G_t]) = |V(G_t)| = 2t+4$; see \cite[Corollary 10.1.21]{V} for the latter assertion.
\end{proof}

We now have all the pieces to prove the main theorem of this paper.
\begin{proof}[Proof of Theorem \ref{maintheorem}]
Let $(r,d)$ be a pair of integers such that $4\le r \le d$. 
If $r=d$, then the graph $C_4^{(r)}$ introduced in Example \ref{e. deg=reg} has the required invariants.
Assume now $r<d$.  Set $q =d-r+1\ge 2$.
By Theorem \ref{t. reg and deg G_t} the
graph $G_q$ has $\reg(\K[G_q])=4$ and $\deg h_{\K[G_q]}(x)=q+3=d-r+4.$ Thus, applying Corollary \ref{p. inductive step reg and degree}  $(r-4)$ times, we get the existence of a graph $G_{r,d}$ with  $\reg (\K[G_{r,d}])=r$ and $\deg h_{\K[G_{r,d}]}(x)=d.$ (The graph $G_{r,d}$ is obtained by gluing $G_q$ with $r-4$ squares $C_4$ along one edge, no matter which one.)
\end{proof}

As another consequence of
Corollary \ref{extremalbetti} we derive a new proof
of the main result of \cite{HHKOK}.

\begin{corollary}[{\cite[Theorem 0.2]{HHKOK}}]\label{hibiresult}
Fix integers $7 \leq f \leq d$.  Then there exists a graph
$G$ whose toric ring satisfies 
${\rm depth}(\K[G]) = f $ and $\dim (\K[G]) = d$.
\end{corollary}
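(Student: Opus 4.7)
The plan is to leverage the sharp control on $\pdim(\K[G_q])$ provided by Corollary~\ref{extremalbetti}: this pins $\depth(\K[G_q])$ at $7$ for every $q \geq 2$ via the Auslander--Buchsbaum formula, furnishing a family of base graphs whose ``excess dimension'' $\dim - \depth = q-1$ can be tuned freely. A simple pendant-edge augmentation then raises depth and dimension in lockstep to reach any admissible target pair.

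For the Cohen-Macaulay case $f = d$, I would bypass the main machinery and take $G$ to be any tree on $d + 1$ vertices: a tree carries no primitive closed even walks, so Theorem~\ref{UGB} gives $I_G = 0$, whence $\K[G]$ is the polynomial ring on $d$ edge variables and $\depth(\K[G]) = \dim(\K[G]) = d$.

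For the remaining range $f < d$, set $q := d - f + 1 \geq 2$ and begin with the graph $G_q$ of Definition~\ref{d.graph G_t}. By Corollary~\ref{extremalbetti} we have $\pdim(\K[G_q]) = 2q - 1$; since $\K[E_q]$ is a polynomial ring in $|E(G_q)| = 2q + 6$ variables, the Auslander--Buchsbaum formula yields $\depth(\K[G_q]) = (2q+6) - (2q-1) = 7$. Because $G_q$ contains a triangle it is non-bipartite, and \cite[Corollary~10.1.21]{V} then gives $\dim(\K[G_q]) = |V(G_q)| = q + 6 = d - f + 7$. Thus $G_q$ already realizes the required excess $d - f$; only the common increment $f - 7$ remains to be added.

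The final step is to attach $f - 7$ pendant edges to $G_q$, all at the vertex $x_1$, say. Each pendant adjoins a new leaf vertex $v$ and an edge $e$ with $\varphi(e) = x_1 v$; because $v$ appears in no other edge, any binomial in the enlarged toric ideal that involves $e$ must carry $e$ with equal exponent on both sides, so no new relations are introduced and the new toric ring is the polynomial extension $\K[G_q][e]$. This raises both $\depth$ and $\dim$ by one, and iterating $f - 7$ times produces a graph with depth $f$ and dimension $d$. The only genuinely delicate point is this pendant-extension claim, but it reduces to a one-line cancellation argument on the $v$-degree of a prospective binomial, so the overall proof remains short.
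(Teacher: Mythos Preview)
Your proposal is correct and shares its core with the paper's argument: both compute $\depth(\K[G_q]) = 7$ by combining Corollary~\ref{extremalbetti} (which gives $\pdim(\K[G_q]) = 2q-1$) with the Auslander--Buchsbaum formula. The difference lies in the completion. The paper, after establishing depth~$7$ for every $G_t$ with $t \geq 1$ (treating $t=1$ separately, since $I_{G_1}$ is principal), simply defers to the construction in the introduction of~\cite{HHKOK} to pass from the base family to arbitrary $(f,d)$. You instead make the argument fully self-contained: you handle the Cohen--Macaulay case $f=d$ directly with trees, and for $f<d$ you spell out the pendant-edge augmentation and justify that it extends the toric ring by a polynomial variable, so depth and dimension rise in tandem. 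This buys independence from~\cite{HHKOK} and lets you avoid the separate $t=1$ analysis entirely (your parameter $q = d-f+1$ is always $\geq 2$). Both routes are short; yours is more explicit.
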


\begin{proof}
As described in \cite{HHKOK}, the proof of the above
result hinges upon finding a graph on $k+6$ vertices with $k \geq 1$
whose toric ideal $I_G$ has the property that 
${\rm depth}(\K[G])=7$.
Using our notation, \cite{HHKOK} show that the graphs
$G_t$ with $t \geq 1$ (where $G_1$ is the graph of two triangles joined by a path of length two)
satisfy $\depth (\K[G_t]) = 7$.  But, for $t \geq 2$
this also follows from Lemma \ref{. reg deg 1 extremal betti}, Corollary \ref{extremalbetti}, and the Auslander-Buchsbaum formula since
$\depth(\K[G_t]) = \dim (\K[E_t]) - {\rm pdim}(\K[G_t]) = 
2t+6 - (2t-1) = 7$.  When $t=1$,
then $I_{G_t}$ has a single generator, so
${\rm pdim}(\K[G_t]) = 1$ and 
${\rm depth}(\K[G_1]) =8-1 = 7$.
The proof now runs as in the introduction of~\cite{HHKOK}.
\end{proof}


\section{Further comments and observations}\label{s. other cases}

We now turn our attention to integers $d,r \geq 1$ not
covered by Theorem \ref{maintheorem}.   

While
Hibi and Matsuda \cite{HM1} showed that
for all $d,r \geq 1$, there is a monomial
ideal $I$ with $(r,d) = ({\rm reg}(R/I),\deg h_{R/I}(x))$,
this behaviour will not hold for toric ideals
of graphs.  In particular, if $r=1$, then 
$d$ must also equal $1$.

\begin{theorem}\label{restriction}
Let $G$ be a graph such that $\reg(\K[G])=1$. Then $\deg h_{\K[G]}(x)=1$. 
\end{theorem}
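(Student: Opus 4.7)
The plan is to show that $\reg(\K[G])=1$ forces $\K[G]$ to be Cohen-Macaulay, from which $\deg h_{\K[G]}(x)=1$ follows via a Hilbert-series calculation.

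Since $\reg(\K[G])=1$, the ideal $I_G$ has a $2$-linear resolution, and $\beta_{p,p+1}(\K[G])$ (with $p=\pdim\K[G]$) is the unique extremal Betti number: indeed $\beta_{i,i+1}$ for $i<p$ cannot be extremal because $\beta_{p,p+1}$ lies to its northeast on the same diagonal. Applying Lemma~\ref{. reg deg 1 extremal betti} with $(a,b)=(p,p+1)$, and then using the Auslander-Buchsbaum formula $p=n-\depth(\K[G])$ (where $n=|E(G)|$ and $d=\dim\K[G]$), gives
\[
\deg h_{\K[G]}(x) \;=\; (p+1) - n + d \;=\; d-\depth(\K[G])+1 \;\geq\; 1,
\]
with equality if and only if $\K[G]$ is Cohen-Macaulay. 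Thus the theorem reduces to showing that $\K[G]$ is Cohen-Macaulay under the hypothesis $\reg(\K[G])=1$.

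For the Cohen-Macaulay step, the key observation is that $\K[G]$ is a domain, since the toric ideal $I_G$ is prime. Consequently $X=\operatorname{Proj}(\K[G])\subset \mathbb{P}^{n-1}$ is an integral projective subscheme, and the hypothesis makes it $2$-regular. By the classical Del Pezzo-Bertini classification of $2$-regular integral varieties (see, e.g., Eisenbud-Green-Hulek-Popescu, \emph{Small schemes and varieties of minimal degree}), $X$ must be a variety of minimal degree, and every such variety is arithmetically Cohen-Macaulay. Therefore $\K[G]$ is Cohen-Macaulay, and combining with the first step yields $\deg h_{\K[G]}(x)=1$.

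The main obstacle is the invocation of the classification of $2$-regular integral projective varieties, which is a nontrivial input. A purely combinatorial alternative would proceed by showing that $\reg(\K[G])=1$ forces $G$ to satisfy Ohsugi-Hibi's odd cycle condition (no two vertex-disjoint odd cycles without a joining bridge), whence $\K[G]$ would be normal and hence Cohen-Macaulay by Hochster's theorem. The difficulty with this route lies in using Theorem~\ref{UGB} to control the structure of minimal generators of $I_G$: one must rule out graphs containing a pair of odd cycles sharing no edge by exhibiting a primitive closed even walk of length $>4$ whose associated binomial is genuinely a minimal generator (and not a combination of quadratic ones), which is a delicate combinatorial check.
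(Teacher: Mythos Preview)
Your proof is correct and follows the same overall architecture as the paper's: both arguments first reduce to showing that $\K[G]$ is Cohen--Macaulay, and then conclude via Lemma~\ref{. reg deg 1 extremal betti} together with the Auslander--Buchsbaum formula that $\deg h_{\K[G]}(x)=\reg(\K[G])=1$.

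The difference lies entirely in how the Cohen--Macaulay step is obtained. You invoke the Eisenbud--Green--Hulek--Popescu classification of $2$-regular integral nondegenerate subschemes of $\mathbb{P}^{n-1}$ (equivalently, the Del Pezzo--Bertini classification of varieties of minimal degree), which applies because $I_G$ is prime with no linear generators. This is a heavy but perfectly valid input, and it has the advantage of working for \emph{any} homogeneous prime ideal with $2$-linear resolution, not just toric ideals of graphs. The paper, by contrast, stays within the combinatorial toric world: since $\reg(\K[G])=1$ forces $I_G$ to be generated by quadrics, one can cite directly \cite[Corollary~5.26]{Binomi} (an Ohsugi--Hibi type result) to conclude that $\K[G]$ is Cohen--Macaulay. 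This is exactly the ``combinatorial alternative'' you sketch in your last paragraph --- the delicate primitive-walk analysis you anticipate is already packaged in the literature, so no extra work is needed. In short: your route is more general, the paper's route is more elementary and better tailored to the setting.
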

\begin{proof}
It can be assumed that the graph $G=(V,E)$ is connected.
Since $\reg(\K[G])=1$, then $\K[G]$ has a linear resolution (hence it has a unique extremal Betti number $\beta_{a,a+1}(\K[G])$) and, in particular, $I_G$ is only generated by quadratic binomials. Thus, from \cite[Corollary 5.26]{Binomi}, the ring $\K[G]$ is Cohen-Macaulay ($\depth(\K[G]) = \dim \K[G] $). So, the Auslander-Buchsbaum formula implies
$|E|-\dim \K[G] =  \pdim(\K[G]) = a$  
and, from Lemma \ref{. reg deg 1 extremal betti}, we get
$\deg h_{\K[G]}(x)=\reg(\K[G])$. 
\end{proof}

\begin{remark}
In the proof of Lemma \ref{restriction}, we saw that $\K[G]$ 
was Cohen-Macaualay, from which we deduced that
$\deg h_{\K[G]}(x) = {\rm reg}(\K[G])$.
As shown in \cite[Corollary B.4.1]{Vas}, this holds
in general, i.e., if $\K[G]$ is Cohen-Macaulay,
then the regularity and the degree of the $h$-polynomial
are equal.   We know of no example of a graph $G$ such that $I_G$ is generated in degrees $\le 3$ and $\K[G]$  is not a Cohen-Macaulay ring, thus suggesting
that if ${\rm reg}(\K[G]) \leq 3$, there may be
restrictions for $\deg h_{\K[G]}(x)$.  On the other hand,
the graph $G$ with vertex set $V = \{x_1,\ldots,x_8\}$ and
edge set 
\begin{eqnarray*}
E &=& \{\{x_1,x_2\},\{x_1,x_3\},\{x_2,x_3\},\{x_3,x_4\},
\{x_4,x_5\},\{x_4,x_6\},\{x_4,x_7\},\{x_5,x_6\},\{x_5,x_7\},\{x_6,x_7\}\}\\
&&
\cup \{\{x_i,x_8\} ~|~ i = 1,\ldots, 7\}
\end{eqnarray*} 
is generated in degrees
$\leq 4$, including
a generator of degree
four, and $\K[G]$ is not Cohen-Macaulay.  (We thank
Kazunori Matsuda for pointing us towards this example.)
\end{remark}

Now we make an observation about the graphs having 
$\deg h_{\K[G]}(x) =1$.

\begin{remark}Let $G=(V,E)$ be a connected and non-bipartite graph such that $h_{\K[G]}(x)=1+ax,$ $a\neq 0.$ From equations \eqref{eq. HS and Betti} and \eqref{eq. reduced HS} in Section \ref{s. background} we have
\[
HS_{\K[G]}(x) = \dfrac{1+ax}{(1-x)^{|V|}}= \frac{1+\sum_j B_j x^{j}}{(1-x)^{|E|} }\ \ \text{where}\ \ B_j=\sum_{i}(-1)^i \beta_{i,j}(\K[G]).
\]
Note that in particular $B_1=0$ and $B_2=-\beta_{1,2}(\K[G])$. 
Thus, we get
$(1+ax)(1-x)^{|E|-|V|}= 1+\sum_j B_j x^{j}.$
So, by comparing coefficients,
$a=|E|- |V|$ and $\beta_{1,2}(I_G)=a^2-\binom{a}{2}=\binom{a+1}{2}=\binom{|E|- |V|+1}{2}$.  So, if there is
a non-bipartite graph $G$ with $\deg h_{\K[G]}(x) =1$,
then it must have $\binom{|E|- |V|+1}{2}$ quadratic
generators.
\end{remark}

\begin{remark}
We know of no example of a graph $G$ such that $\beta_{1,2}(\K[G])=\binom{|E|- |V|+1}{2}$ and $\K[G]$  is not a Cohen-Macaulay ring. 
\end{remark}

Finally, note that the strategy of Theorem \ref{maintheorem} is to find graphs 
where we can control the regularity and the degree
of the $h$-polynomial, and use it as a ``seed" to
repeatedly apply Corollary \ref{p. inductive step reg and degree}.  Thus, to extend Theorem \ref{maintheorem}
for integers $d < r$, we need an appropriate initial
graph.  As the next example shows, we can extend
Theorem \ref{maintheorem} slightly to include
all integers $(r,d)$ with $5 \leq r = d+1$.

\begin{example}Let $Z$ be the graph in Figure \ref{fig:graph Z}  on the vertex set $V=\{x_1, \ldots, x_{10}\}$ and edges $E=$ $\{\{x_1, x_2\},$ $\{x_1, x_3\},$ $  \{x_1, x_7\},$ $ \{x_1, x_8\},$ $ \{x_1, x_9\},$ $\{x_4, x_5\},$ $ \{x_4,  x_6\},$ $ \{x_4, x_7\},$ $ \{x_4, x_8\},$ $ \{x_4, x_9\},$ $\{x_2, x_3\}, $ $ \{x_5, x_6\}, $ $  \{x_7, x_8\}, $ $ \{x_8, x_{10}\},$ $ \{x_9, x_{10}\}\}.$
\begin{figure}[ht]
    \centering
\begin{tikzpicture}
 [scale=.45,auto=left,every node/.style={circle,fill=lightgray, scale=.5, minimum size=3.2em}]
\node (x1) at (3,4) {$x_1$};
\node (x2) at (9,4) {$x_4$};
\node (y1) at (1,1) {$x_7$};
\node (y2) at (6,1) {$x_8$};
\node (y3) at (12,1) {$x_9$};
\node (y6) at (9,0) {$x_{10}$};
\node (z1) at (1,7) {$x_2$};
\node (z2) at (5,7) {$x_3$};
\node (w1) at (7,7) {$x_5$};
\node (w2) at (11,7) {$x_6$};
\draw (z1) -- (z2);
\draw (z2) -- (x1);
\draw (w1) -- (w2);
\draw (w2) -- (x2);
\draw (x1) -- (z1);
\draw (x1) -- (y1);
\draw (x1) -- (y2);
\draw (x1) -- (y3);
\draw (y1) -- (y2);

\draw (y2) -- (y6);
\draw (y6) -- (y3);

\draw (x2) -- (w1);
\draw (x2) -- (y1);
\draw (x2) -- (y2);
\draw (x2) -- (y3);
\end{tikzpicture}
    \caption{The graph $Z$.}
    \label{fig:graph Z}
\end{figure}
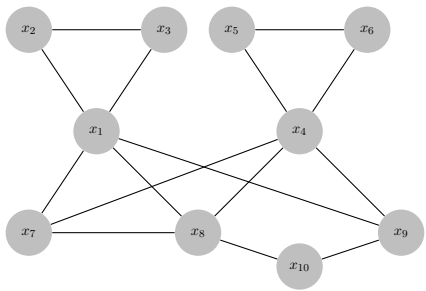
One can compute the Betti diagram and the Hilbert series of $\K[Z]$ by using {\em Macaulay2}~\cite{M2}:
 \[
 \begin{array}{lcr}
\beta(\K[Z])=\begin{matrix}
       &0&1&2&3&4&5&6\\
       \text{total:}&1&12&40&56&37&11&1\\
       \text{0:}&1&\text{.}&\text{.}&\text{.}&\text{.}&\text{.}&\text{.}\\
       \text{1:}&\text{.}&5&5&\text{.}&\text{.}&\text{.}&\text{.}\\
       \text{2:}&\text{.}&2&\text{.}&1&\text{.}&\text{.}&\text{.}\\
       \text{3:}&\text{.}&\text{.}&10&10&\text{.}&\text{.}&\text{.}\\
       \text{4:}&\text{.} &5&25&45&37&10&1\\
       \text{5:}&\text{.}&\text{.}&\text{.}&\text{.}&\text{.}&1&\text{.}\\
       \end{matrix}      
       &\ \ \ \text{and}\  &
       HS_{\K[Z]}(x)= \dfrac{1+5 x+10 x^{2}+13 x^{3}+10 x^{4}}{({1-x})^{10}}.
       \\
  \end{array}
\]
Thus, the graph $Z$ covers the new case 
$\reg(\K[Z])=5$ and $\deg h_{\K[Z]}(x)=4$.
As a consequence of Corollary \ref{p. inductive step reg and degree}, for any pair $(r,d)$ of positive integers such that $d\ge 4$ and $r-d=1$, there is a graph~$G$ with $\reg(\K[G])=r$ and $\deg h_{\K[G]}(x)=d$.\end{example}

Table \ref{summarytable} summarizes all the results from this paper.  In the table, a filled in circle
$\bullet$ denotes a pair $(r,d)$ for which there
is a graph $G$ with $({\rm reg}(\K[G]),\deg h_{\K[G]}(x))
= (r,d)$, the empty circle $\circ$ denotes a pair
$(r,d)$ for which there is no such graph, and the 
unfilled spots denote pairs for which we currently do not
know of a graph that satisfies $({\rm reg}(\K[G]),\deg h_{\K[G]}(x))
= (r,d)$.

\begin{table}[ht]
\begin{tabular}{r|c|c|c|c|c|c|c|ccc}
       &$r=1$& $2$  & $3$  & $4$  & $5$  & $6$  & $7$  &$\cdots$  \\
\hline
$d=1$   & $\bullet$ &     &      &    &    &     &   \\\hline
$2$     & $\circ$   & $\bullet$ &   &    &    &     &     \\\hline
$3$     & $\circ$ &     & $\bullet$  &    &    &     &    \\\hline
$4$     & $\circ$   &    &    &  $\bullet$  &  $\bullet$  &     &    \\\hline
$5$     &  $\circ$  &    &     &  $\bullet$ &  $\bullet$ &  $\bullet$  &    \\\hline
$6$     &  $\circ$ &     &     &  $\bullet$ &  $\bullet$ &  $\bullet$  & $\bullet$  & \\\hline
$\vdots$&  $\vdots$  &    &     &  $\vdots$  &  $\vdots$  &  $\ddots$   & $\ddots$ &$\ddots$  \\
\end{tabular}
\caption{Summary of comparison of the regularity and the
    degree of the $h$-polynomial.}\label{summarytable}
    \end{table}
\bibliographystyle{plain}

\end{document}